\newcommand{\si}[1]{#1}    
\newcommand{\jo}[1]{}      
	\newtheorem{theorem}{Theorem}[section]
	\newtheorem{proposition}{Proposition}[section]
	\newtheorem{corollary}{Corollary}[section]
	\newtheorem{lemma}{Lemma}[section]
	\newtheorem{definition}{Definition}[section]
	\newtheorem{example}{Example}[section]
	\newtheorem{remark}{Remark}[section]
	\crefname{hypothesis}{Hypothesis}{Hypotheses}
\tikzstyle{nodo} = [rectangle, rounded corners, minimum width=1cm, minimum height=0.5cm,text centered, draw=black, fill=green!30]
\tikzstyle{arrow} = [thick,->,>=stealth]
\newcommand{\Y}{\mathbb{S}^m}						 
\renewcommand{\S}{\mathbb{S}}					 
\newcommand{\pcon}[1]{\Pi_{\K}(#1)}				 
\newcommand{\bpcon}[1]{\Pi_{\K}\left(#1\right)} 
\newcommand{\seq}[1]{\{#1^k\}_{k\in \N}}		 
\renewcommand{\bar}{\overline}                   
\newcommand{\bd}{\mathrm{bd}}                    
\newcommand{\I}{\mathbb{I}}                      
\newcommand{\K}{\mathbb{S}^m_+}                     
\newcommand{\lin}{\mathrm{lin}}                  
\newcommand{\Ker}{\mathrm{Ker\hspace{0.03cm}}}   
\renewcommand{\Im}{\mathrm{Im\hspace{0.03cm}}}   
\newcommand{\Diag}{\mathrm{Diag }}               
\renewcommand{\int}{\mathrm{int\hspace{0.03cm}}} 
\newcommand{\rank}{\mathrm{rank }}               
\newcommand{\F}{\mathcal{F}}                     
\newcommand{\norm}[1]{\|#1\|}                    
\newcommand{\enorm}[1]{\|#1\|_2}                 
\newcommand{\projs}{\Pi_{\mathbb{S}^m_+}}        
\newcommand{\R}{\mathbb{R}}  					 
\newcommand{\T}{\top\hspace{-1pt}}               
\newcommand{\N}{\mathbb{N}}                      
\newcommand{\xb}{\bar{x}}                        
\newcommand{\Yb}{\bar{Y}}                        
\newcommand{\sym}{\mathbb{S}^m}                  
\newcommand{\A}{\mathcal{A}}
\newcommand{\spn}{\textnormal{span}}
\renewcommand{\limsup}{\mathrm{Lim \hspace{0.05cm} sup}}
\newcommand{\mand}{\textnormal{ and }}
\newcommand{\sdpwcrcq}{weak-CRCQ}
\newcommand{\sdpwcpld}{weak-CPLD}
\newcommand{\sdpscrcq}{seq-CRCQ}
\newcommand{\sdpscpld}{seq-CPLD}
\newcommand{\nlpcrcq}{CRCQ}
\newcommand{\nlpcpld}{CPLD}
\newcommand{\cb}[1]{\mathcal{E}_r(#1)}
\begin{document}

\si{
	\title{Sequential constant rank constraint qualifications for nonlinear semidefinite programming with applications \footnotetext{The authors received financial support from FAPESP (grants 2017/18308-2, 2017/17840-2, and 2018/24293-0), CNPq (grants 301888/2017-5, 303427/2018-3 and 404656/2018-8), and ANID (FONDECYT grant  1201982 and  Basal Program CMM ANID PIA AFB170001).}}

	\author{
	Roberto Andreani \thanks{Department of Applied Mathematics, University of Campinas, Campinas, SP, Brazil. 
		Email: {\tt andreani@unicamp.br}}
	\and
	Gabriel Haeser \thanks{Department of Applied Mathematics, University of S{\~a}o Paulo, S{\~a}o Paulo, SP, Brazil. 
		Emails: {\tt ghaeser@ime.usp.br, leokoto@ime.usp.br}}	
	\and 
	Leonardo M. Mito \footnotemark[2]
	\and
	H{\'e}ctor Ram{\'i}rez C. \thanks{Departamento de Ingenier{\'i}a Matem{\'a}tica and Centro de Modelamiento Matem{\'a}tico (AFB170001 - CNRS IRL2807), Universidad de Chile, Santiago, Chile.
	Email: {\tt hramirez@dim.uchile.cl}}
	}
}

\jo{
	\headers{Sequential constant rank CQs for NSDP}{R. Andreani, G. Haeser, L. M. Mito, and H. Ram{\'i}rez C.}

\title{Sequential constant rank constraint qualifications for nonlinear semidefinite programming with applications \thanks{Submitted to the editors on June 7, 2021. \funding{The authors received financial support from FAPESP (grants 2017/18308-2, 2017/17840-2, and 2018/24293-0), CNPq (grants 301888/2017-5, 303427/2018-3 and 404656/2018-8), and ANID (FONDECYT grant  1201982 and  Basal Program CMM ANID PIA AFB170001).}}}
	%

\author{
		Roberto Andreani \thanks{Department of Applied Mathematics, University of Campinas, Campinas, SP, Brazil (\email{andreani@unicamp.br}).}
	\and 
		Gabriel Haeser \thanks{Department of Applied Mathematics, University of S{\~a}o Paulo, S{\~a}o Paulo, SP, Brazil (\email{ghaeser@ime.usp.br}), (\email{leokoto@ime.usp.br}).}
	\and 
		Leonardo M. Mito\footnotemark[3]
	\and
		H{\'e}ctor Ram{\'i}rez C. \thanks{Departamento de Ingenier{\'i}a Matem{\'a}tica and Centro de Modelamiento Matem{\'a}tico (AFB170001 - CNRS IRL2807), Universidad de Chile, Santiago, Chile (\email{hramirez@dim.uchile.cl}).}
	}

}


\maketitle

\si{
	\abstract{We present new constraint qualification conditions for nonlinear semidefinite programming that extend some of the constant rank-type conditions from nonlinear programming. As an application of these conditions, we provide a unified global convergence proof of a class of algorithms to stationary points without assuming neither uniqueness of the Lagrange multiplier nor boundedness of the Lagrange multipliers set. This class of algorithm includes, for instance, general forms of augmented Lagrangian, sequential quadratic programming, and interior point methods. We also compare these new conditions with some of the existing ones, including the nondegeneracy condition, Robinson's constraint qualification, and the metric subregularity constraint qualification.
	
	\
	
	\textbf{Keywords:} Constant rank, Constraint qualifications, Semidefinite programming, Algorithms, Global convergence.}
}

\jo{
	\begin{abstract}
	  We present new constraint qualification conditions for nonlinear semidefinite programming that extend some of the constant rank-type conditions from nonlinear programming. As an application of these conditions, we provide a unified global convergence proof of a class of algorithms to stationary points without assuming neither uniqueness of the Lagrange multiplier nor boundedness of the Lagrange multipliers set. This class of algorithm includes, for instance, general forms of augmented Lagrangian, sequential quadratic programming, and interior point methods. We also compare these new conditions with some of the existing ones, including the nondegeneracy condition, Robinson's constraint qualification, and the metric subregularity constraint qualification.
	\end{abstract}
	\begin{keywords}
	  Constant rank, Constraint qualifications, Semidefinite programming, Algorithms, Global convergence.
	\end{keywords}
	\begin{AMS}
	  90C46, 90C30, 90C26, 90C22 
	\end{AMS}
}

%
%
%
%
%
%
%
%
%
%
%
%

\section{Introduction}\label{sec:intro}

 \textit{Constraint qualification} (CQ) conditions play a crucial role in optimization. They permit to establish first- and second-order necessary  optimality conditions for local minima and support the convergence theory of many practical algorithms (see, for instance, a unified convergence analysis for a whole class of algorithms by Andreani et al.~\cite[Thm. 6]{ahss12}). Some of the well-known CQs in nonlinear programming (NLP) are the \emph{constant-rank constraint qualification} (\nlpcrcq{}), introduced by Janin~\cite{janin}, and the \emph{constant positive linear dependence} (\nlpcpld{}) condition. The latter was first conceptualized by Qi and Wei~\cite{qiwei}, and then proved to be a constraint qualification by Andreani et al.~\cite{Andreani2005}. Moreover, it has been a source of inspiration for other authors to define even weaker constraint qualifications for NLP, such as the \emph{constant rank of the subspace component} (CRSC)~\cite{CPG}, and the relaxed versions of \nlpcrcq{}~\cite{rcr} and \nlpcpld{}~\cite{ahss12}. Our interest in constant rank-type conditions is motivated, mainly, by their applications towards obtaining global convergence results of iterative algorithms to stationary points without relying on boundedness or uniqueness of Lagrange multipliers. However, several other applications that we do not pursue in this paper may be expected to be extended to the conic context, such as the computation of the derivative of the value function~\cite{janin,param} and the validity of strong second-order necessary optimality conditions that do not rely on the whole set of Lagrange multipliers~\cite{aes2}. Besides, their ability of dealing with redundant constraints, up to some extent, gives modellers some degree of freedom without losing regularity or convergence guarantees on algorithms. For instance, the standard NLP trick of replacing one nondegenerate equality constraint by two inequalities of opposite sign does not violate \nlpcrcq{}, while violating the standard \textit{Mangasarian-Fromovitz CQ} (MFCQ).

Constant-rank type CQs have been proposed in conic programming only very recently. The first extension of \nlpcrcq{} to \textit{nonlinear second-order cone programming} (NSOCP) appeared in \cite{ZZ}, but it was shown to be incorrect in \cite{ZZerratum}. A second proposal~\cite{crcq-naive}, which encompasses also \textit{nonlinear semidefinite programming} (NSDP) problems, consists of transforming some of the conic constraints into NLP constraints via a reduction function, whenever it was possible, and then demanding constant linear dependence of the reduced constraints, locally. This was considered by the authors a {\em naive} extension, since it basically avoids the main difficulties that are expected from a conic framework. 
What both these works have in common is that they somehow neglected the conic structure of the problem.

In a recent article~\cite{weaksparsecq}, we introduced weak notions of regularity for \textit{nonlinear semidefinite programming} (NSDP) that were defined in terms of the eigenvectors of the constraints -- therein called \emph{weak-nondegeneracy} and \emph{weak-Robinson's CQ}. These conditions take into consideration only the diagonal entries of some particular transformation of the matrix constraint. Noteworthy, weak-nondegeneracy happens to be equivalent to the \textit{linear independence CQ} (LICQ) when an NLP constraint is modeled as a structurally diagonal matrix constraint, unlike the standard \textit{nondegeneracy} condition \cite{shapfan}, which in turn is considered the usual extension of LICQ to NSDP. Moreover, the proof technique we employed in~\cite{weaksparsecq} induces a direct application in the convergence theory of an external penalty method. In this paper, we use these conditions to derive our extension proposals for \nlpcrcq{} and \nlpcpld{} to NSDP, which also recover their counterparts in NLP when it is modelled as a structurally diagonal matrix constraint. These CQs are called, in this paper, as \sdpwcrcq{} and \sdpwcpld{}, respectively.

However, to provide support for algorithms other than the external penalty \jo{\linebreak} method, we present stronger variants of these conditions, called sequential-CRCQ and sequential-CPLD (abbreviated \sdpscrcq{} and \sdpscpld{}, respectively), by incorporating perturbations in their definitions. This makes them robust and easily connectible with algorithms that keep track of approximate Lagrange multipliers, but also more exigent. Nevertheless, \sdpscrcq{} is still strictly weaker than nondegeneracy, and independent of Robinson's CQ, while \sdpscpld{} is strictly weaker than Robinson's CQ. On the other hand, \sdpwcrcq{} is strictly weaker than \sdpscrcq{}, while \sdpwcpld{} is strictly weaker than \sdpwcrcq{} and \sdpscpld{}. Moreover, we show that \sdpscpld{} implies the \textit{metric subregularity CQ}.

The content of this paper is organized as follows: Section~\ref{sec:common} introduces notation and some well-known theorems and definitions that will be useful in the sequel. 
Our main results for NSDP are presented in Sections~\ref{sec:weak} and~\ref{sec:seq}. Indeed, Section~\ref{sec:weak} is devoted to the study of \sdpwcrcq{} and \sdpwcpld{} and their properties, which in turn need to invoke weak-nondegeneracy and weak-Robinson's CQ as a motivation.
Section~\ref{sec:seq} studies \sdpscrcq{} and \sdpscpld{} -- the main CQs of this paper -- and some algorithms supported by them. In  Section~\ref{sec:msr}, we discuss the relationship between \sdpscpld{} and the metric subregularity CQ. Lastly, some final remarks are given in Section~\ref{sec:conclusion}.

\if{
{\bf OLD TEXT: WHY SECTION 3 contains the main results and SECTION 4 THE MAIN CQ? I doesn't fit to me ...}
In Section~\ref{sec:weak} we present our main results for NSDP in a constructive reasoning, starting from weak-nondegeneracy and weak-Robinson's CQ, passing through \sdpwcrcq{} and \sdpwcpld{}, to finally arrive at Section~\ref{sec:seq}, where we introduce \sdpscrcq{} and \sdpscpld{} -- the main CQs of this paper -- and some algorithms supported by them. In  Section~\ref{sec:msr}, we discuss the relationship between \sdpscpld{} and the metric subregularity CQ. Lastly, some final remarks are given in Section~\ref{sec:conclusion}.
}\fi

\section{A nonlinear semidefinite programming review}\label{sec:common}

In this section, $\sym$ denotes the linear space of all $m\times m$ real symmetric matrices equipped with the inner product defined as $\langle M,N \rangle\doteq \textnormal{trace}(MN)=\sum_{i,j=1}^m M_{ij}N_{ij}$ for all $M,N\in \sym$, and $\sym_+$ is the cone of all positive semidefinite matrices in $\sym$. Additionally, for every $M\in \Y$ and every $\tau>0$, we denote by $B(M,\tau)\doteq\{Z\in \Y\colon \|M-Z\|<\tau\}$ the open ball centered at $M$ with radius $\tau$ with respect to the Frobenius norm $\|M\|\doteq\sqrt{\langle M,M\rangle}$, and its closure will be denoted by $\bar B(M,\tau)$.

We consider the NSDP problem in standard (dual) form:
\begin{equation}\label{NSDP}
  \tag{NSDP}
  \begin{aligned}
    & \underset{x \in \mathbb{R}^{n}}{\text{Minimize}}
    & & f(x), \\
    & \text{subject to}
    & & G(x) \succeq 0, \\
  \end{aligned}
\end{equation}
where $f\colon\R^{n}\to\R$ and $G \colon\R^n \to \Y $ are continuously differentiable functions, and $\succeq$ is the partial order induced by $\sym_+$; that is, $M\succeq N$ if, and only if, $M-N\in \sym_+$.

Equality constraints are omitted in~\eqref{NSDP} for simplicity of notation, but our definitions and results are flexible regarding inclusion of such constraints, which should be done in the same way as in~\cite{crcq-naive}. Moreover, throughout the whole paper, we will denote the feasible set of~\eqref{NSDP} by $\F$.

Let us recall that the orthogonal projection of an element $M\in \Y$ onto $\K$, which is defined as
\[
	\Pi_{\K}(M)\doteq \underset{N\in \K}{\textnormal{argmin}} \norm{M-N},
\]
is a convex continuous function of $M$ since $\K$ is nonempty, closed, and convex. Furthermore, since $\K$ is self-dual, every $M\in \Y$ has a \textit{Moreau decomposition}~\cite[Prop. 1]{Moreau} in the form
\[
	M=\pcon{M} - \Pi_{\K}(-M)
\]
with $\langle \pcon{M},\Pi_{\K}(-M) \rangle=0$, and a \textit{spectral decomposition} in the form
\begin{equation}\label{sdp:spec1}
	M=\lambda_1(M)u_1(M) u_1(M)^\T+\ldots+\lambda_m(M)u_m(M) u_m(M)^\T,
\end{equation}
where $u_1(M),\ldots,u_m(M)\in \R^m$ are arbitrarily chosen orthonormal eigenvectors associated with the eigenvalues $\lambda_1(M),\ldots,\lambda_m(M)$, respectively. In turn, these eigenvalues are assumed to be arranged in non-increasing order. Equivalently, we can write~\eqref{sdp:spec1} as $M=U \mathcal{D} U^\T$, where $U$ is an orthogonal matrix whose $i$-th column is $u_i(M)$, and $\mathcal{D}\doteq\textnormal{Diag}(\lambda_1(M),\ldots,\lambda_m(M))$ is a matrix whose diagonal entries are $\lambda_1(M),\ldots,\lambda_m(M)$ and the remaining entries are zero.

A convenient property of the orthogonal projection onto $\sym_+$ is that, for every $M\in \sym$, we have
\[
	\projs(M)= [\lambda_1(M)]_+u_1(M) u_1(M)^\T+\ldots+[\lambda_m(M)]_+u_m(M) u_m(M)^\T,
\]
where $[ \ \cdot \ ]_+\doteq\max\{ \ \cdot \ , 0\}$.

Given a sequence of sets $\seq{S}$, recall its \textit{outer limit} (or \textit{upper limit}) in the sense of Painlev{\'e}-Kuratowski (cf.~\cite[Def. 4.1]{rwets} or ~\cite[Def. 2.52]{bshapiro}), defined as
\[
	\limsup_{k\in \N} S^k\doteq \left\{y\colon \exists I\subseteq_{\infty} \N, \ \exists \{y^k\}_{k\in I}\to y, \ \forall k\in I, \ y^k\in S^k \right\},
\]
which is the collection of all cluster points of sequences $\seq{y}$ such that $y^k\in S^k$ for every $k\in \N$. The notation $I\subseteq_{\infty} \N$ means that $I$ is an infinite subset of the set of natural numbers $\N$.

We denote the \emph{Jacobian} of $G$ at a given point $x\in \R^n$ by $DG(x)$, and the \emph{adjoint} operator of $DG(x)$ will be denoted by $DG(x)^*$. Moreover, the $i$-th partial derivative of $G$ at $x$ will be denoted by $D_{x_i} G(x)$, and the \textit{gradient} of $f$ at $x$ will be written as $\nabla f(x)$, for every $x\in \R^n$. 

\subsection{Classical optimality conditions and constraint qualifications}

As usual in continuous optimization, we drive our attention towards local solutions of~\eqref{NSDP} that satisfy the so-called \emph{Karush-Kuhn-Tucker} (KKT) conditions, defined as follows:

\begin{definition}\label{def:KKT}
	We say that the \emph{Karush-Kuhn-Tucker} conditions hold at $\xb\in\F$ when there exists some $\Yb\succeq 0$ such that
	\begin{equation*}\label{eq:KKT}
		\nabla_x L(\xb,\Yb)=0 \quad \mand \quad \langle G(\xb),\Yb\rangle=0,
	\end{equation*}
	where $L(x,Y)\doteq f(x)-\langle G(x),Y\rangle$ is the Lagrangian function of~\eqref{NSDP}. The vector $\Yb$ is called a \emph{Lagrange multiplier} associated with $\xb$, and the set of all Lagrange multipliers associated with $\xb$ will be denoted by $\Lambda(\xb)$.
\end{definition}
Of course, not every local minimizer satisfies KKT in the absence of a CQ. In order to recall some classical CQs, it is necessary to use the \emph{(Bouligand) tangent cone} to $\K$ at a point $M\succeq 0$. This object can be characterized in terms of any matrix $E\in \R^{m\times m-r}$, whose columns form an orthonormal basis of $\Ker M$, as follows (e.g., \cite[Ex. 2.65]{bshapiro}):
\begin{equation}\label{eq:tangent_cone_SDP}
	T_{\S^m_+}(M) = \{N\in \S^m\colon E^\top N E \succeq 0\},
\end{equation}
where $r$ denotes the rank of $M$.
%
%
%
So, its \emph{lineality space}, 
defined as the largest linear space contained in $T_{\K}(M)$, is computed as follows: 
\begin{equation}\label{eq:lin_tangent_cone_SDP}
	\lin(T_{\S^m_+}(M)) = \{N\in \S^m\colon E^\top N E = 0\}.
\end{equation}
The latter is a direct consequence of the identity $\lin(C)=C\cap (-C)$, satisfied for any closed convex cone $C$.

One of the most recognized constraint qualifications in NSDP is the \emph{nondegeneracy} (or \emph{transversality}) condition introduced by Shapiro and Fan~\cite{shapfan}, which can be characterized~\cite[Eq. 4.172]{bshapiro} at a point $\xb\in \F$ when the following relation is satisfied:
\[
	\Im DG(\xb) + \lin(T_{\K}(G(\xb)))=\Y.
\]
If $\xb$ is a local solution of~\eqref{NSDP} that satisfies nondegeneracy, then $\Lambda(\xb)$ is a singleton, but the converse is not necessarily true unless $G(\xb)+\Yb\in \int\K$  holds for some $\Yb\in \Lambda(\xb)$~\cite[Prop. 4.75]{bshapiro}. This last condition is known as \textit{strict complementarity} in this NSDP framework. Here, $\int\K$ stands for the topological interior of $\K$. By~\eqref{eq:tangent_cone_SDP} it is possible to characterize nondegeneracy at $\xb$ by means of any given matrix $\bar{E}$ with orthonormal columns that span $\Ker G(\xb)$. Indeed, following~\cite[Sec. 4.6.1]{bshapiro}, nondegeneracy holds at $\xb$ if, and only if, either $\Ker G(\xb)=\{0\}$ or the linear mapping $\psi_{\xb}\colon\R^n \to \S^{m-r}$ given by
\begin{equation}
\label{def:psi_trans}
\psi_{\xb}( \ \cdot \ ) \doteq \bar{E}^\top  D  G(\xb) [ \ \cdot \ ] \bar{E}
\end{equation}
is surjective, which is in turn equivalent to saying that the vectors
\begin{equation}\label{def:vii}
	\begin{aligned}
		v_{ij}(\xb,\bar{E}) & \doteq \left[\bar{e}_i^\T D_{x_1}G(\xb) \bar{e}_j, \ldots, \bar{e}_i^\T D_{x_n}G(\xb) \bar{e}_j\right]^\T, \quad 1\leq i\leq j\leq m-r,\\
	\end{aligned}
\end{equation}
are linearly independent~\cite[Prop. 6]{Shapiro1997}, where $\bar{e}_i$ denotes the $i$-th column of $\bar{E}$ and $r$ is the rank of $G(\xb)$. 

Another widespread constraint qualification is Robinson's CQ~\cite{Robinson1976}, which can be characterized at $\xb\in \F$ by the existence of some $d\in \R^n$ such that 
\begin{equation}\label{def:RobCQ}
	G(\xb)+DG(\xb)[d]\in \int\K.
\end{equation}
It is known (e.g., \cite[Props. 3.9 and 3.17]{bshapiro}) that when $\xb$ is a local solution of~\eqref{NSDP}, then $\Lambda(\xb)$ is nonempty and compact if, and only if, Robinson's CQ holds at $\xb$. 

Given the properties and characterizations recalled above, the nondegeneracy condition is typically considered the natural extension of LICQ from NLP to NSDP, while Robinson's CQ is considered the extension of MFCQ.

\subsection{A sequential optimality condition connected to the external penalty method}

If we do not assume any CQ, every local minimizer of~\eqref{NSDP} can still be proved to satisfy at least a \textit{sequential} type of optimality condition that is deeply connected to the classical external penalty method. Namely: 
\begin{theorem}\label{thm:minakkt}
	Let $\xb$ be a local minimizer of~\eqref{NSDP}, and let $\{\rho_k\}_{k\in \N}\to +\infty$. Then, there exists some $\seq{x}\to \xb$, such that for each $k\in \N$, $x^k$ is a local minimizer of the regularized penalized function 
	\[
		F(x)\doteq f(x)+\frac{1}{2}\enorm{x-\xb}^2 
	  	+ \frac{\rho_k}{2} \norm{\Pi_{\K}(-G(x))}^2.
	\]
\end{theorem}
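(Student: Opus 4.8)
The plan is to use a standard penalization/compactness argument. First I would invoke the classical fact that a local minimizer $\xb$ of~\eqref{NSDP} is a \emph{strict} local minimizer of the perturbed objective $f(x)+\frac12\enorm{x-\xb}^2$ over $\F$: there exists $\delta>0$ such that $f(x)+\frac12\enorm{x-\xb}^2>f(\xb)$ for every $x\in\F\cap\bar B(\xb,\delta)$ with $x\neq\xb$. This is the usual trick of adding a quadratic localization term so that $\xb$ becomes isolated among feasible points nearby.

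Next, for each $k\in\N$ consider the problem of minimizing $F(x)=f(x)+\frac12\enorm{x-\xb}^2+\frac{\rho_k}{2}\norm{\Pi_{\K}(-G(x))}^2$ over the compact set $\bar B(\xb,\delta)$. Since $F$ is continuous (the projection $\Pi_{\K}$ is continuous, as recalled in the excerpt) and $\bar B(\xb,\delta)$ is compact, Weierstrass guarantees a global minimizer $x^k$ of $F$ on $\bar B(\xb,\delta)$. The key step is then to show that $x^k\to\xb$. For this I would argue as follows: since $\xb$ is feasible, $\norm{\Pi_{\K}(-G(\xb))}=0$, so $F(x^k)\le F(\xb)=f(\xb)$ for all $k$. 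This bounds $f(x^k)$ from above (using boundedness of $\bar B(\xb,\delta)$) and forces $\frac{\rho_k}{2}\norm{\Pi_{\K}(-G(x^k))}^2\le f(\xb)-f(x^k)+\text{(bounded terms)}$, hence $\norm{\Pi_{\K}(-G(x^k))}^2\to 0$ as $\rho_k\to+\infty$. Taking any cluster point $x^*$ of $\seq{x}$ (which exists by compactness), continuity gives $\Pi_{\K}(-G(x^*))=0$, i.e.\ $G(x^*)\succeq 0$, so $x^*\in\F\cap\bar B(\xb,\delta)$; moreover, passing to the limit in $F(x^k)\le f(\xb)$ yields $f(x^*)+\frac12\enorm{x^*-\xb}^2\le f(\xb)$. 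By the strict local minimality established in the first step, this is only possible if $x^*=\xb$. Since every cluster point equals $\xb$ and the sequence lies in a compact set, $x^k\to\xb$.

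Finally, once $x^k\to\xb$, for all $k$ large enough $x^k$ lies in the \emph{interior} of $\bar B(\xb,\delta)$, hence $x^k$ is an unconstrained local minimizer of $F$ on a neighborhood, which is exactly the assertion of the theorem (discarding the finitely many initial indices, or reindexing).

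The main obstacle — though a mild one — is the convergence $x^k\to\xb$: one must be careful that the quadratic localization term is what rules out spurious cluster points at other feasible points of $\bar B(\xb,\delta)$ where $f$ might tie or beat $f(\xb)$, and that the estimate on $\norm{\Pi_{\K}(-G(x^k))}$ is driven to zero purely by $\rho_k\to+\infty$ against a bounded numerator. Everything else is routine continuity and compactness.
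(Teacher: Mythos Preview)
Your argument is correct and is precisely the standard penalization/compactness proof. The paper itself does not spell out a proof but merely refers to \cite[Thm.~3.2]{ahv} and the first part of the proof of \cite[Thm.~2]{Andreani2020}, where the argument is essentially the one you outline.
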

\begin{proof}
See~\cite[Thm. 3.2]{ahv}. For a more general proof, see the first part of the proof of~\cite[Thm. 2]{Andreani2020}.
\end{proof}

Note that Theorem~\ref{thm:minakkt} provides a sequence $\seq{x}\to \xb$ such that each $x^k$ satisfies, with an error $\varepsilon^k\to 0^+$, the first-order optimality condition of the unconstrained minimization problem
\[
	\underset{x\in \R^n}{\textnormal{Minimize}} \ f(x)+\frac{\rho_k}{2}\|\Pi_{\K}(-G(x))\|^2,
\]
so $\seq{x}$ characterizes an output sequence of an~\emph{external penalty method}. Moreover, the sequence \si{\linebreak} $\seq{Y}\subseteq \K$, where 
\[
	Y^k\doteq \rho_k \Pi_{\K}(-G(x^k))
\]
for every $k\in \N$, consists of \emph{approximate Lagrange multipliers} for $\xb$, in the sense that $\nabla_x L (x^k,Y^k)\to 0$ and complementarity and feasibility are approximately fulfilled, in view of Moreau's decomposition -- indeed, note that $\langle G(x^k)+\Delta^k,Y^k\rangle=0$ and $G(x^k)+\Delta^k\succeq 0$, with $\Delta^k=-\Pi_{\K}(-G(x^k))\to 0$, for every $k\in \N$.

These sequences will suffice to obtain the results of the first part of this paper (Section~\ref{sec:weak}), but in order to extend their scope to a larger class of iterative algorithms, in Section~\ref{sec:seq}, we will need a more general sequential optimality condition, which will be presented later on. 

\subsection{Reviewing constant rank-type constraint qualifications for NLP}

This section is meant to be a brief review of the main results regarding the classical nonlinear programming problem:
\begin{equation}\label{NLP}
  \tag{NLP}
  \begin{aligned}
    & \underset{x \in \mathbb{R}^{n}}{\text{Minimize}}
    & & f(x), \\
    & \text{subject to}
    & & g_1(x)\geq 0, \ldots, g_m(x)\geq 0, \\
  \end{aligned}
\end{equation}
where $f, g_1,\ldots,g_m\colon \R^n\to \R$ are continuously differentiable functions. 

As far as we know, the first constant rank-type constraint qualification was introduced by Janin~\cite{janin}, to obtain directional derivatives for the optimal value function of a perturbed NLP problem. Janin's condition is defined as follows:

\begin{definition}\label{nlp:crcq}
Let $\xb\in \F$. The \emph{constant rank constraint qualification} \jo{\linebreak} for~\eqref{NLP} (\nlpcrcq{}) holds at $\xb$ if there exists a neighborhood $\mathcal{V}$ of $\xb$ such that, for every subset $J\subseteq \{i\in \{1,\ldots,m\}\colon g_i(\xb)=0\}$, the rank of the family $\{\nabla g_i(x)\}_{i\in J}$ remains constant for all $x\in \mathcal{V}$.
\end{definition}

As noticed by Qi and Wei~\cite{qiwei} it is possible to rephrase Definition~\ref{nlp:crcq} in terms of the ``constant linear dependence'' of $\{\nabla g_i(x)\}_{i\in J}$ for every $J$. That is, \nlpcrcq{} holds at $\xb$ if, and only if, there exists a neighborhood $\mathcal{V}$ of $\xb$ such that, for every $J\subseteq \{i\in \{1,\ldots,m\}\colon g_i(\xb)=0\}$, if $\{\nabla g_i(\xb)\}_{i\in J}$ is linearly dependent, then $\{\nabla g_i(x)\}_{i\in J}$ remains linearly dependent for every $x\in \mathcal{V}$. Based on this characterization, Qi and Wei proposed a relaxation of \nlpcrcq{}, which they called \emph{constant positive linear dependence} (\nlpcpld{}) condition, but this was only proven to be a constraint qualification a few years later, in~\cite{Andreani2005}. To properly define \nlpcpld{}, recall that a family of vectors $\{z_i\}_{i\in J}$ of $\R^n$ is said to be \emph{positively linearly independent} when
\[
\sum_{i\in J} z_i \alpha_i=0, \ \ \alpha_i\geq 0, \ \forall i\in J \quad \Rightarrow \quad \alpha_i=0, \ \forall i\in J.
\]
Next, we recall the CPLD constraint qualification:

\begin{definition}\label{nlp:cpld}
Let $\xb\in \F$. The \emph{constant positive linear dependence} condition for~\eqref{NLP} (\nlpcpld{}) holds at $\xb$ if there exists a neighborhood $\mathcal{V}$ of $\xb$ such that, for every $J\subseteq \{i\in \{1,\ldots,m\}\colon g_i(\xb)=0\}$, if the family $\{\nabla g_i(\xb)\}_{i\in J}$ is positively linearly dependent, then $\{\nabla g_i(x)\}_{i\in J}$ remains linearly dependent for all $x\in \mathcal{V}$.
\end{definition}

Clearly, \nlpcpld{} is implied by \nlpcrcq{}, which is in turn implied by LICQ and is independent of MFCQ. Moreover, \nlpcpld{} is implied by MFCQ, and all those implications are strict~\cite{Andreani2005,janin}. To show that our extensions of \nlpcrcq{} and \nlpcpld{} to NSDP are indeed constraint qualifications (Theorem~\ref{sdp:mincrcqkkt}), we shall take inspiration in~\cite{ahss12}, where the authors employ Theorem~\ref{thm:minakkt} together with the well-known \emph{Carath{\'e}odory's Lemma}:

\begin{lemma}[Exercise B.1.7 of~\cite{bertsekasnl}]\label{lem:carath} Let $z_1,\dots,z_{p}\in\mathbb{R}^n$, and let $\alpha_1,\ldots,\alpha_{p}\in \R$ be arbitrary. Then,
there exists some $J \subseteq  \{1, \ldots, p \}$ and some scalars $\tilde{\alpha}_{i}$ with $i \in J$, such that $ \{z_{i} \}_{i \in J} $ is linearly independent,
\[
	 \sum_{i=1}^{p} \alpha_{i} z_{i} = \sum_{i \in J} \tilde{\alpha}_{i} z_{i},
\]
and $\alpha_{i}\tilde{\alpha}_i>0$, for all $i\in J$.
\end{lemma}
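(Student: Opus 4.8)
The plan is to argue by induction on the size of the support $I\doteq\{i\in\{1,\ldots,p\}\colon \alpha_i\neq 0\}$, repeatedly ``thinning out'' a representation of the vector $\sum_{i=1}^p\alpha_i z_i$ until the vectors appearing in it become linearly independent, all the while never letting any surviving coefficient change sign. Observe first that indices $i$ with $\alpha_i=0$ may be discarded outright: they cannot belong to $J$ (the condition $\alpha_i\tilde\alpha_i>0$ forbids it) and they contribute nothing to the sum. Hence we may and do assume $\alpha_i\neq 0$ for all $i$, i.e. $I=\{1,\ldots,p\}$. If $\{z_i\}_{i=1}^p$ is already linearly independent we are done, taking $J=\{1,\ldots,p\}$ and $\tilde\alpha_i=\alpha_i$. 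The base case $p=0$ (in particular, when the original sum is the zero vector) is handled by $J=\emptyset$, the empty family being vacuously linearly independent.

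Suppose then that $\{z_i\}_{i=1}^p$ is linearly dependent. There exist scalars $\beta_1,\ldots,\beta_p$, not all zero, with $\sum_{i=1}^p\beta_i z_i=0$, so that for every $t\in\R$ we have the identity $\sum_{i=1}^p(\alpha_i-t\beta_i)z_i=\sum_{i=1}^p\alpha_i z_i$. The goal is to pick $t$ so that (i) each perturbed coefficient $\alpha_i-t\beta_i$ keeps the sign of $\alpha_i$ or becomes zero, and (ii) at least one of them becomes zero, which strictly reduces the support and lets the induction proceed.

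To realize this, consider the finite nonempty set $\{\alpha_i/\beta_i\colon \beta_i\neq 0\}$ of nonzero reals and let $t^\ast$ be one of its elements of minimal absolute value; replacing the vector $\beta$ by $-\beta$ if necessary we may assume $t^\ast>0$. Writing $\alpha_i-t\beta_i=\alpha_i(t_i-t)/t_i$ with $t_i\doteq\alpha_i/\beta_i$ whenever $\beta_i\neq 0$, and using $|t_i|\geq t^\ast$, a short sign check shows that at $t=t^\ast$ every coefficient $\alpha_i-t^\ast\beta_i$ either retains the sign of $\alpha_i$ or vanishes, and that it vanishes for the index attaining the minimum. This yields a new representation $\sum_{i=1}^p\alpha_i z_i=\sum_{i\in I'}\alpha_i' z_i$ with $\alpha_i'\doteq\alpha_i-t^\ast\beta_i$, where $I'\doteq\{i\colon\alpha_i'\neq 0\}\subsetneq\{1,\ldots,p\}$ and $\alpha_i\alpha_i'>0$ for all $i\in I'$. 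Applying the induction hypothesis to $\{z_i\}_{i\in I'}$ with coefficients $\{\alpha_i'\}_{i\in I'}$ produces $J\subseteq I'$ and scalars $\tilde\alpha_i$ with $\{z_i\}_{i\in J}$ linearly independent, $\sum_{i\in I'}\alpha_i'z_i=\sum_{i\in J}\tilde\alpha_i z_i$, and $\alpha_i'\tilde\alpha_i>0$; combined with $\alpha_i\alpha_i'>0$ this gives $\alpha_i\tilde\alpha_i>0$, completing the step.

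The only delicate point is the sign bookkeeping in the perturbation step: one must select the perturbation parameter with the correct sign and of minimal modulus, so that no coefficient crosses zero before the first one actually reaches it, and then verify that the strict inequality $\alpha_i\alpha_i'>0$ (not merely $\geq 0$) is preserved for the coefficients that survive. Everything else is routine linear algebra together with a finite induction on $p$.
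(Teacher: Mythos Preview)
The paper does not provide its own proof of this lemma; it is quoted as Exercise~B.1.7 of Bertsekas and used as a black box throughout. So there is nothing to compare against.

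Your argument is the standard one and it is correct. The only place one might quibble is the sign bookkeeping, but your case analysis is sound: writing $\alpha_i-t^\ast\beta_i=\alpha_i(t_i-t^\ast)/t_i$ for $\beta_i\neq 0$, the factor $(t_i-t^\ast)/t_i$ is nonnegative when $t_i>0$ (since $t_i\geq t^\ast$ by minimality) and strictly positive when $t_i<0$ (since then both numerator and denominator are negative), while indices with $\beta_i=0$ are untouched. Thus every surviving coefficient keeps its strict sign, at least one is killed, and the induction on the support size terminates. The edge cases (empty support, $z_i=0$) are handled correctly by your conventions.
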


See also~\cite{haeserip}. If one considers equality constraints in~\eqref{NSDP} separately, one should employ an adapted version of Carath{\'e}odory's Lemma that fixes a particular subset of vectors, which can be found in~\cite[Lem. 2]{ahss12}. In our current setting, Lemma~\ref{lem:carath} will suffice as is. 

%
%
%
%
%
%
%
%
%
%
%
%

\section{Constant rank constraint qualifications for NSDP}\label{sec:weak}

Based on the relationship between LICQ and \nlpcrcq{}, the most natural candidate for an extension of \nlpcrcq{} to NSDP is to demand every subset of
\[
	\{v_{ij}(x,\bar{E})\colon 1\leq i\leq j\leq m-r\}
\] to remain with constant rank (or constant linear dependence) in a neighborhood of $\xb$. However, this candidate cannot be a CQ, as shown in the following counterexample, adapted from~\cite[Eq. 2]{ZZerratum}:
\begin{example}\label{ex:sdpnino}
	Consider the problem to minimize $f(x)\doteq -x$ subject to
	\[
		G(x)\doteq 
		\begin{bmatrix}
			x & x+x^2\\
			x+x^2 & x
		\end{bmatrix}
		\succeq 0 .
	\]
	For this problem, $\xb\doteq 0$ is the only feasible point and, therefore, the unique global minimizer of the problem. 
	\if{
	Let us compute $v_{ij}(x,E)$ in terms of an arbitrary orthogonal matrix $E$, whose columns are denoted by $e_1\doteq[a, b]^\T$ and $e_2\doteq[c, d]^\T$, where $a^2+b^2=c^2+d^2=1$, and $ac+bd=0$:
	\begin{equation*}
		\begin{array}{l}
			v_{11}(x,E)=
		1+2ab(1+2x),\\
			v_{22}(x,E)=
		1+2cd(1+2x),\\
			v_{12}(x,E)=
		(ad+bc)(1+2x).
		\end{array}
	\end{equation*}
	}\fi
	Since $G(\xb)=0$, the columns of the matrix $\bar{E}\doteq\I_2$
	form an orthonormal basis of $\Ker G(\xb)$ (the whole space $\R^2$). For this choice of $\bar{E}$, we have 
	\[
		v_{11}(x,\bar{E})=v_{22}(x,\bar{E})=1 \quad \mand \quad v_{12}(x,\bar{E})=1+2x.
	\]
	Since they are all bounded away from zero, the rank of every subset of $\{v_{ij}(x,\bar{E})\colon 1\leq i\leq j\leq 2\}$ remains constant for every $x$ around $\xb$.
	However, Note that $\xb$ does not satisfy the KKT conditions because any $\Yb\doteq \begin{bmatrix}\Yb_{11}&\Yb_{12}\\ \Yb_{12}&\Yb_{22}\end{bmatrix}\in \Lambda(\xb)$ would necessarily be a solution of the system 
	\[
		\begin{array}{l}
			\Yb_{11}\geq 0,\\
			\Yb_{22}\geq 0,\\
			\Yb_{11}\Yb_{22}-\Yb_{12}^2\geq 0,\\
			\Yb_{11}+2\Yb_{12}+\Yb_{22}=-1,
		\end{array}
	\] 
	which has no solution.

\end{example}

Besides, it is well-known that even if $G$ is affine, not all local minimizers of~\eqref{NSDP} satisfy KKT, but in this case every subfamily of $\{v_{ij}(x,\bar{E})\colon 1\leq i\leq j\leq m-r\}$ remains with constant rank for every $x\in \R^n$.

What Example~\ref{ex:sdpnino} tells us is that $\bar{E}=\I_2$ may be a bad choice of $\bar{E}$. In fact, let us choose a different $\bar{E}$, namely, denote the columns of $\bar{E}$ by $\bar{e}_1\doteq[a, b]^\T$ and $\bar{e}_2\doteq[c, d]^\T$, 
and take $a=-1/\sqrt{2}$ and $b=c=d=1/\sqrt{2}$. This election of $\bar{E}$ happens to diagonalize $G(x)$ for every $x$, but it follows that
	\begin{equation*}
		\begin{array}{l}
			v_{11}(x,\bar E)=
		1+2ab(1+2x) =-2x ; \\
			v_{22}(x,\bar E)= 
		1+2cd(1+2x) =2 (1+x);\\
			v_{12}(x,\bar E)=
		(ad+bc)(1+2x)= 0,
		\end{array}
	\end{equation*}
and the rank of $\{v_{11}(x,\bar{E})\}$ does not remain constant in a neighborhood of $\xb=0$.

In light of our previous work \cite{weaksparsecq}, the situation presented above is not surprising. Therein, we already noted that identifying the ``good" matrices $\bar E$ allows us to obtain relaxed versions of nondegeneracy and Robinson's CQ for NSDP.
This identification can also be used to extend constant-rank type conditions  to NSDP and is the starting point for the results we will present in the current manuscript.  

For the sake of completeness, let us quickly summarize a discussion raised in~\cite{weaksparsecq} before presenting the results of this paper. Consider a feasible point $\xb\in \F$ and denote by $r$ the rank of $G(\xb)$. Observe that $\lambda_{r}(M)>\lambda_{r+1}(M)$ for every $M\in \S^m$ close enough to $G(\xb)$. Thus, when $r<m$, define the set
\begin{equation}\label{def:er}
\cb{M}\doteq \left\{  E\in \R^{m\times m-r}\colon \begin{array}{l} ME=E\Diag(\lambda_{r+1}(M),\ldots,\lambda_{m}(M)) \\ E^\T E=\I_{m-r}\end{array} \right\},
\end{equation}
which consists of all matrices whose columns are orthonormal eigenvectors associated with the $m-r$ smallest eigenvalues of $M$, which is well defined whenever $\lambda_{r}(M)>\lambda_{r+1}(M)$. In~\eqref{def:er}, $\Diag(\lambda_{r+1}(M),\ldots,\lambda_{m}(M))$ denotes the diagonal matrix whose diagonal entries are $\lambda_{r+1}(M),\ldots,\lambda_{m}(M)$. By convention, $\mathcal{E}_r(M)\doteq\emptyset$ when $r=m$. By construction, $\cb{M}$ is nonempty provided $r<m$ and $M$ is close enough to $G(\xb)$. In particular, in this situation, $\cb{G(\xb)}$ is the set of all matrices with orthonormal columns that span $\Ker G(\xb)$.

We showed, in~\cite[Prop. 3.2]{weaksparsecq}, that nondegeneracy can be equivalently stated as the linear independence of the smaller family, $\{v_{ii}(\xb,\bar{E})\}_{i\in \{1,\ldots,m-r\}}$, as long as this holds for all $\bar{E}\in \cb{G(\xb)}$ instead of a fixed one. Similarly, Robinson's CQ can be translated as the positive linear independence of the family $\{v_{ii}(\xb,\bar{E})\}_{i\in \{1,\ldots,m-r\}}$ for every $\bar{E}\in \cb{G(\xb)}$~\cite[Prop. 5.1]{weaksparsecq}. This characterization suggested a weak form of nondegeneracy (and Robinson's CQ) that takes into account only a particular subset of $\cb{G(\xb)}$ instead of the whole set, which reads as follows:
\begin{definition}[Def. 3.2 and Def. 5.1 of~\cite{weaksparsecq}]\label{def:wndgwrob}
Let $\xb\in \F$ and let $r$ be the rank of $G(\xb)$. We say that $\xb$ satisfies:
\begin{itemize}
\item {\emph{Weak-nondegeneracy condition} for NSDP} if either $r=m$ or, for each sequence $\seq{x}\to \xb$, there exists some $\bar{E}\in \limsup_{k\in \N} \cb{G(x^k)}$ such that the family $\{v_{ii}(\xb,\bar{E})\}_{i\in \{1,\ldots,m-r\}}$ is linearly independent;
\item {\emph{Weak-Robinson's CQ condition} for NSDP} if either $r=m$ or, for each sequence $\seq{x}\to \xb$, there exists some $\bar{E}\in \limsup_{k\in \N} \cb{G(x^k)}$ such that the family $\{v_{ii}(\xb,\bar{E})\}_{i\in \{1,\ldots,m-r\}}$ is positively linearly independent.
\end{itemize}
\end{definition}
Note that, in general, $\limsup_{k\in \N} \cb{G(x^k)}\subseteq\cb{G(\xb)}$, but the reverse inclusion is not always true, meaning $\cb{G(x)}$ is not necessarily continuous at $\xb$ as a set-valued mapping. It then follows that weak-nondegeneracy is indeed a strictly weaker CQ than nondegeneracy~\cite[Ex. 3.1]{weaksparsecq}. Moreover, in contrast with nondegeneracy, weak-nondegeneracy happens to fully recover LICQ when $G(x)$ is a structurally diagonal matrix constraint in the form $G(x)\doteq\Diag(g_1(x),\ldots,g_m(x))$~\cite[Prop. 3.3]{weaksparsecq}. Similarly, weak-Robinson's CQ is implied by Robinson's CQ and coincides with MFCQ when $G(x)$ is diagonal.

\subsection{Weak constant rank CQs for NSDP}

A straightforward relaxation of weak-nondegeneracy and weak-Robinson's CQ, likewise NLP, leads to our first extension proposal of \nlpcrcq{} and \nlpcpld{} to NSDP: 
\begin{definition}[\sdpwcrcq{}{} and \sdpwcpld{}]\label{sdp:crcq}
	Let $\xb\in \F$ and let $r$ be the rank of $G(\xb)$. We say that $\xb$ satisfies the:
	\begin{itemize}
		\item {\emph{Weak constant rank constraint qualification} for NSDP} (\sdpwcrcq{}) if either $r=m$ or, for each sequence $\seq{x}\to \xb$, there exists some $\bar{E}\in \limsup_{k\in \N} \cb{G(x^k)}$ such that, for every subset $J\subseteq \{1,\ldots,m-r\}$: if the family $\{v_{ii}(\xb,\bar{E})\}_{i\in J}$ is linearly dependent, then $\{v_{ii}(x^k,E^k)\}_{i\in J}$ remains linearly dependent, for all $k\in I$ large enough.
		
		\item {\emph{Weak constant positive linear dependence constraint qualification}} for NSDP (\sdpwcpld{}) if either $r=m$ or, for each sequence $\seq{x}\to \xb$, there exists some $\bar{E}\in \limsup_{k\in \N} \cb{G(x^k)}$ such that, for every \si{subset} $J\subseteq \{1,\ldots,m-r\}$: if the family $\{v_{ii}(\xb,\bar{E})\}_{i\in J}$ is positively linearly dependent, then \jo{the family} $\{v_{ii}(x^k,E^k)\}_{i\in J}$ remains linearly dependent, for all $k\in I$ large enough.
	\end{itemize}

For both definitions, $I\subseteq_{\infty} \N$, and $\{E^k\}_{k\in I}$ is a sequence converging to $\bar E$ and such that  $E^k\in \cb{G(x^k)}$ for every $k\in I$, as required by the Painlev{\'e}-Kuratowski outer limit.

\end{definition}
	
Clearly, weak-nondegeneracy implies \sdpwcrcq{}, which in turn implies \sdpwcpld{}. Also, \si{the condition} weak-Robinson's CQ implies \sdpwcpld{} as well. However, Robinson's CQ and its weak variant are both independent of \sdpwcrcq{}. In fact, the next example shows that \sdpwcrcq{} is not implied by either (weak-)Robinson's CQ or \sdpwcpld{}.

\begin{example}\label{sdp:ex0} Let us consider the constraint 
\[
	G(x)\doteq\begin{bmatrix}
		2x_1+x_2^2 & -x_2^2 \\
		-x_2^2 & 2x_1+x_2^2
	\end{bmatrix}
\]
and note that, for every orthogonal matrix $E$ in the form 
\[
	E\doteq\begin{bmatrix}
a & c\\
b & d
\end{bmatrix},
\]
we have
\[
	v_{11}(x,E)=\begin{bmatrix}
		2\\
		2(a-b)^2 x_2
	\end{bmatrix}
	\quad \mand \quad
	v_{22}(x,E)=\begin{bmatrix}
		2\\
		2(c-d)^2 x_2
	\end{bmatrix}.
\]
Then, at $\bar{x}=0$, we have $v_{11}(\xb,\bar{E})=v_{22}(\xb,\bar{E})=[2,0]^\T$, so they are linearly dependent, but positively linearly independent for all $\bar{E}\in \cb{G(\xb)}$. However, choosing any sequence $\seq{x}\to 0$ such that $x_2^k\neq 0$ for all $k$, it follows that the eigenvalues of $G(x_k)$:
\[
	\lambda_1(G(x^k))=2(x_1+x_2^2) 
	\quad \mand \quad 
	\lambda_2(G(x^k))=2x_1,
\]
are simple, with associated orthonormal eigenvectors 
\[
	u_1(G(x^k))=\left(-\frac{1}{\sqrt{2}},\frac{1}{\sqrt{2}}\right)
	\quad \mand \quad
	u_2(G(x^k))=\left(\frac{1}{\sqrt{2}},\frac{1}{\sqrt{2}}\right),
\]
respectively, for every $k\in \N$. Then, the only sequence $\seq{E}$ such that $E^k\in\cb{G(x^k)}$ for every $k$, up to sign, is given by $a=-1/\sqrt{2}$ and $b=c=d=1/\sqrt{2}$. However, keep in mind that $v_{ii}(x,E)$, $i\in \{1,2\}$, is invariant to the sign of the columns of $E$, so $v_{22}(x^k,E^k)=[2,0]^\T$ and $v_{11}(x^k,E^k)=[2, 4x_2^k]^\T$ are linearly independent for all large $k$. Therefore, we conclude that (weak-)Robinson's CQ holds at $\xb$, and consequently \sdpwcpld{} also holds, but \sdpwcrcq{} does not hold at $\xb$.
\end{example}

Conversely, we show with another counterexample, that \sdpwcrcq{} does not imply (weak-)Robinson's CQ, and neither does \sdpwcpld{}.  

\begin{example}\label{sdp:goodex}
Let us consider the constraint
\[
	G(x)\doteq\begin{bmatrix}
		x & x^2 \\
		x^2 & -x
	\end{bmatrix}
\]
 and the point $\xb=0$. Take any sequence $\seq{x}\to \xb$ such that $x^k\neq 0$ for every $k$, and consider two subsequences of it, indexed by $I_+$ and $I_-$, such that $x^k>0$ for every $k\in I_+$, and $x^k<0$ for every $k\in I_-$. Then, for every $k\in I_+$, we have that:
 \[
	\lambda_1(G(x^k))=x^k \sqrt{(x^k)^2+1} 
	\quad \mand \quad 
	\lambda_2(G(x^k))=-x^k \sqrt{(x^k)^2+1},
\]
are simple, with associated orthonormal eigenvectors uniquely determined (up to sign) by
\[
	u_1(G(x^k))=\frac{1}{\eta^k_1}\left(\frac{1+\sqrt{(x^k)^2+1}}{x^k},1\right)\jo{,}
	\si{\quad \mand} \quad
	u_2(G(x^k))=\frac{1}{\eta^k_2}\left(\frac{1-\sqrt{(x^k)^2+1}}{x^k},1\right),
\]
where
\[
	\eta^k_1\doteq \sqrt{\left(\frac{1+\sqrt{(x^k)^2+1}}{x^k}\right)^2+1} \quad \mand \quad \eta^k_2\doteq \sqrt{\left(\frac{1-\sqrt{(x^k)^2+1}}{x^k}\right)^2+1}.
\] 

Moreover, one can verify that whenever $I_+$ is an infinite set, 
\[
	\lim_{k\in I_+} u_1(G(x^k))= (1,0) \quad \mand \quad \lim_{k\in I_+} u_2(G(x^k))= (0,1).\]
Then, we have that for all $\bar{E}\in\limsup_{k\in I_+} \cb{G(x^k)}$, the vectors
\[
	v_{11}(\xb,\bar{E})=1 \quad \mand \quad v_{22}(\xb,\bar{E})=-1
\]
are positively linearly dependent. And, in addition, since $\eta_1^k\to \infty$ and $\eta_2^k\to 0$, the vectors
\[
	v_{11}(x^k,E^k)=\frac{\eta_1^k+4\sqrt{(x^k)^2+1}-2}{\eta_1^k} \quad \mand \quad v_{22}(x^k,E^k)=\frac{\eta_2^k-4\sqrt{(x^k)^2+1}-2}{\eta_2^k}
\]
are nonzero and have opposite signs; and thus, remain positively linearly dependent, for all large $k\in I_+$.

For the indices $k\in I_-$ the order of $\lambda_1(G(x^k))$ and $\lambda_2(G(x^k))$ is swapped, together with their respective eigenvectors, and we have $\lim_{k\in I_-} u_1(G(x^k))=(0,1)$ and $\lim_{k\in I_-}u_2(G(x^k))=(-1,0)$. Hence, for all $\bar{E}\in\limsup_{k\in I_-} \cb{G(x^k)}$, the vectors
\[
	v_{11}(\xb,\bar{E})=-1 \quad \mand \quad v_{22}(\xb,\bar{E})=1
\]
are also positively linearly dependent. The order of $v_{11}(x^k,E^k)$ and $v_{22}(x^k,E^k)$ is also swapped, so they remain positively linearly dependent for all large $k\in I_-$.

By the above reasoning, observe that any sequence $\seq{x}\to \xb$, such that $x^k\neq 0$ for every $k\in \N$, shows that (weak-)Robinson's CQ fails at $\xb$. Moreover, if $x^k=0$ for infinitely many indices, we may simply take $E^k=\bar{E}=\I_2$ for every $k$, and then $v_{11}(x^k,E^k)=v_{11}(\xb,\bar{E})=1$ and $v_{22}(x^k,E^k)=v_{22}(\xb,\bar{E})=-1$ are positively linearly dependent for every $k\in \N$. This completes checking that \sdpwcpld{} and \sdpwcrcq{} both hold at $\xb$, while (weak-)Robinson's CQ does not.
\end{example}

Just as it happens in NLP, the \sdpwcpld{} condition is strictly weaker than (weak-)Robinson's CQ, and also weaker than \sdpwcrcq{}, which are in turn, independent. Furthermore, let us establish a formal relationship between \sdpwcrcq{} and \sdpwcpld{}, and their NLP counterparts:

\begin{proposition}\label{prop:crcqrecovers}
	Let $G(x)\doteq \Diag(g_1(x)\ldots,g_m(x))$ be a structurally diagonal constraint and let $\xb$ be such that $g_1(\xb)\geq 0,\ldots,g_m(\xb)\geq 0$. Then, the following statements are equivalent:
	\begin{enumerate}
		\item \sdpwcrcq{} holds at $\xb$;
		\item For every $J\subseteq \A(\xb)$, if the set $\{\nabla g_i(\xb)\colon i\in J\}$ is linearly dependent, then $\{\nabla g_i(x)\colon i\in J\}$ is also linearly dependent, for every $x$ close enough to $\xb$;
	\end{enumerate}
	where $\A(\xb)\doteq \{i\in \{1,\ldots,m\}\colon g_i(\xb)=0\}$ is the set of \emph{active indices} at $\xb$.
\end{proposition}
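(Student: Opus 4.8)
The plan rests on the fact that for a structurally diagonal constraint the spectral data of $G(x)=\Diag(g_1(x),\ldots,g_m(x))$ is explicit: its eigenvalues are $g_1(x),\ldots,g_m(x)$ (sorted non-increasingly) with the canonical vectors $e_1,\ldots,e_m$ as associated eigenvectors. Consequently $m-r=|\A(\xb)|$, and since each inactive $g_i$ stays above a positive constant near $\xb$ while each active $g_i$ tends to $0$, for every $x$ sufficiently close to $\xb$ the $m-r$ smallest eigenvalues of $G(x)$ are exactly $\{g_i(x)\colon i\in\A(\xb)\}$, the gap $\lambda_r(G(x))>\lambda_{r+1}(G(x))$ holds, and every $E\in\cb{G(x)}$ has orthonormal columns $E_1,\ldots,E_{m-r}$ lying in $\spn\{e_i\colon i\in\A(\xb)\}$. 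The computation I will rely on is that, for any such $E$, $v_{jj}(x,E)=\sum_{i\in\A(\xb)}(E_j)_i^2\,\nabla g_i(x)$, where $(E_j)_i$ is the $i$-th entry of $E_j$; in particular, whenever the support of $E_j$ is contained in a set of indices whose gradients at $x$ all coincide, $v_{jj}(x,E)$ equals that common gradient (and when the active eigenvalues $\{g_i(x)\colon i\in\A(\xb)\}$ are pairwise distinct this means $v_{jj}(x,E)=\nabla g_{\sigma(j)}(x)$ for the permutation $\sigma$ sorting them).

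For the implication (2)$\Rightarrow$(1): if $r=m$ there is nothing to prove, so let $r<m$ and fix an arbitrary sequence $x^k\to\xb$. Pass to a subsequence $I\subseteq_{\infty}\N$ along which the bijection $\tau\colon\{1,\ldots,m-r\}\to\A(\xb)$ matching column $j$ to the index realizing the $j$-th largest active eigenvalue of $G(x^k)$ (ties broken by original index) becomes constant; then the fixed matrix $W_\tau$ with columns $e_{\tau(1)},\ldots,e_{\tau(m-r)}$ belongs to $\cb{G(x^k)}$ for every large $k\in I$, so taking $E^k:=W_\tau$ gives $\bar E:=W_\tau\in\limsup_{k}\cb{G(x^k)}$. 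Since $v_{jj}(x,W_\tau)=\nabla g_{\tau(j)}(x)$ for every $x$, any $J\subseteq\{1,\ldots,m-r\}$ with $\{v_{jj}(\xb,\bar E)\}_{j\in J}=\{\nabla g_i(\xb)\}_{i\in\tau(J)}$ linearly dependent satisfies, by hypothesis (2) applied to the index set $\tau(J)\subseteq\A(\xb)$, that $\{v_{jj}(x^k,E^k)\}_{j\in J}=\{\nabla g_i(x^k)\}_{i\in\tau(J)}$ is linearly dependent for large $k$. As the sequence was arbitrary, \sdpwcrcq{} holds at $\xb$.

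For the implication (1)$\Rightarrow$(2): take $J\subseteq\A(\xb)$ with $\{\nabla g_i(\xb)\}_{i\in J}$ linearly dependent. If $g_{i_0}\equiv g_{j_0}$ on a neighborhood of $\xb$ for some distinct $i_0,j_0\in J$ then $\nabla g_{i_0}\equiv\nabla g_{j_0}$ there, so $\{\nabla g_i(x)\}_{i\in J}$ is linearly dependent near $\xb$ and (2) holds for this $J$. Otherwise assume, towards a contradiction, that there is $x^k\to\xb$ with $\{\nabla g_i(x^k)\}_{i\in J}$ linearly independent for every $k$. The set $\bigcup_{i\ne j}\partial\{x\colon g_i(x)=g_j(x)\}$ is a finite union of topological boundaries of closed sets, hence nowhere dense, so for each $k$ we may pick $z^k$ with $\|z^k-x^k\|<1/k$ outside this set and close enough to $x^k$ that $\{\nabla g_i(z^k)\}_{i\in J}$ is still linearly independent. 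By the choice of $z^k$, any two active indices that share an eigenvalue of $G(z^k)$ agree on a neighborhood of $z^k$, hence share their gradient at $z^k$. Now apply \sdpwcrcq{} to $\{z^k\}$ to obtain a good $\bar E\in\limsup_k\cb{G(z^k)}$ with a realizing subsequence $E^k\to\bar E$, $E^k\in\cb{G(z^k)}$, $k\in I$; passing to a further subsequence on which the sorting bijection $\tau$ is constant, each $E^k_j$ is supported inside a single eigenvalue group of $G(z^k)$, so by the observation above $v_{jj}(z^k,E^k)=\nabla g_{\tau(j)}(z^k)$, and letting $k\to\infty$ (joint continuity of $v_{jj}$) gives $v_{jj}(\xb,\bar E)=\nabla g_{\tau(j)}(\xb)$. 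Setting $\hat J:=\tau^{-1}(J)$, the family $\{v_{jj}(\xb,\bar E)\}_{j\in\hat J}=\{\nabla g_i(\xb)\}_{i\in J}$ is linearly dependent while $\{v_{jj}(z^k,E^k)\}_{j\in\hat J}=\{\nabla g_i(z^k)\}_{i\in J}$ is linearly independent for every $k\in I$, contradicting that $\bar E$ was good. Hence no such sequence exists and (2) holds.

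The step I expect to be the main obstacle is precisely the third one: the outer limit $\limsup_k\cb{G(x^k)}$ may contain matrices whose columns are not signed permutations of canonical vectors when active eigenvalues collide along the sequence, in which case $v_{jj}(\xb,\bar E)$ is a genuine convex combination of several active gradients and the clean correspondence with a single $\nabla g_i$ breaks down. Perturbing the sequence into the complement of $\bigcup_{i\ne j}\partial\{x\colon g_i(x)=g_j(x)\}$ is what restores the identity ``$v_{jj}$ equals a single gradient'' and makes it stable under the passage to the limit; carrying this out while tracking the nested subsequences and the tie-breaking in the eigenvalue ordering is the delicate part of the argument.
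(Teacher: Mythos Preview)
Your proof is correct. For (2)$\Rightarrow$(1) you do essentially what the paper does --- choose $E^k$ with canonical-vector columns so that each $v_{jj}$ coincides with a single $\nabla g_i$ --- only you are more careful than the paper about the eigenvalue ordering imposed by the definition of $\cb{G(x^k)}$.

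For (1)$\Rightarrow$(2) your route is genuinely different. The paper enlarges $J$ to a maximal linearly independent subset, takes the good pair $(\bar E,E^k)$ supplied by \sdpwcrcq{}, writes $E^k$ in the block form $[Q^k;0]$ with $Q^k$ orthogonal, and uses the identity $v_{ii}(x^k,E^k)=\sum_j (Q^k_{ji})^2\,\nabla g_j(x^k)$ together with a span/dimension count to produce an index set $S'$ of size $|J|$ for which $\{v_{ii}(x^k,E^k)\}_{i\in S'}$ is independent while $\{v_{ii}(\xb,\bar E)\}_{i\in S'}$ is not. You instead perturb each $x^k$ off the nowhere-dense set $\bigcup_{i\ne j}\partial\{g_i=g_j\}$, so that any eigenvalue coincidence at the perturbed point $z^k$ is forced to come from a local identity $g_i\equiv g_j$; then every column of any $E^k\in\cb{G(z^k)}$ is supported on indices sharing a common gradient, yielding the exact identification $v_{jj}(z^k,E^k)=\nabla g_{\tau(j)}(z^k)$, which survives in the limit and lets you pull $J$ back through the bijection $\tau$. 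Your approach buys a transparent one-to-one match between the $v_{jj}$'s and individual active gradients, bypassing the dimension argument entirely; the price is the density/perturbation step and the nested-subsequence bookkeeping. A small remark: your opening case split is redundant --- the perturbation argument already handles the situation $g_{i_0}\equiv g_{j_0}$ near $\xb$, since then $z^k$ simply lies in the interior of $\{g_{i_0}=g_{j_0}\}$ and nothing changes.
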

\begin{proof}
Let $r\doteq \rank(G(\xb))$, and note that the result follows trivially if $m=r$. Hence, we will assume that $r<m$. For simplicity, we will also assume that $\mathcal{A}(\xb)=\{1,\ldots,m-r\}$.

\begin{itemize}
	\item \textbf{1 $\Rightarrow$ 2:} By contradiction, suppose that there is some $J\subseteq \A(\xb)$ and a sequence $\seq{x}\to \xb$ such that $\{\nabla g_i(x^k)\colon i\in J\}$ is linearly independent for every $k$, but $\{\nabla g_i(\xb)\colon i\in J\}$ is not. Let $\seq{E}$ and $\bar{E}$ be the sequence and its limit point described in Definition~\ref{sdp:crcq}, for this particular $\seq{x}$. Note that any other set $J'$ that contains $J$ such that $\{\nabla g_{i}(x^k)\colon i\in J'\}$ is linearly independent also fits this description, so let us assume that $J$ is maximal.
	
	 Since $G(x^k)$ is diagonal, every eigenvector $v^k$ associated with an eigenvalue $\lambda^k$ must satisfy $G_{jj}(x^k)v^k_j=\lambda^k v^k_j$ for every $j\in\{1,\ldots,m\}$, which implies $\lambda^k=G_{jj}(x^k)$ or $v^k_j=0$.  
	 Moreover, since $G$ is continuous, the $m-r$ smallest eigenvalues of $G(x^k)$ converge to zero, and consequently, they are bounded from above by \
	 \[
	 	\alpha\doteq \frac{1}{2}\min\{G_{ii}(\bar{x})\colon i\in \{m-r+1,\ldots,m\}\}
	 \] 
for $k$ large enough. 
On the other hand, by continuity of $G$ again, the $r$ largest eigenvalues of $G(x^k)$ are bounded from below by $\alpha$ for all $k$ large enough. Hence, it necessarily holds that $v^k_j=0$ for all $j\in\{m-r+1,...,m\}$ and for all $k$ large enough. That is, $E^k$ has the form
	\begin{equation}\label{sdp:ekform}
		E^k=\begin{bmatrix}
			Q^k\\
			0
		\end{bmatrix}, \ \textnormal{where $Q^k\in \R^{m-r\times m-r}$ is orthogonal,}
	\end{equation}
	for every $k$ large enough. 
	A simple computation shows us that 
	\begin{equation}\label{sdp:diageq0}
		v_{ii}(x^k,E^k)=\sum_{j=1}^{m-r}\nabla g_j(x^k) (Q^k_{ji})^2, \ \mand \ v_{ii}(\xb,\bar{E})=\sum_{j=1}^{m-r}\nabla g_j(\xb) \bar{Q}_{ji}^2
	\end{equation}
	for every $i\in \{1,\ldots,m-r\}$, where $\bar{Q}$ is the submatrix of $\bar{E}$ correspondent to the indices of $Q^k$. Observe that
	\[
		\spn(\{\nabla g_{i}(x^k)\colon i\in J\})=\spn(\{\nabla g_{i}(x^k)\colon i\in \{r+1,\ldots,m\}\}),
	\]
	for all $k$ large enough; otherwise, there would be a subsequence $\{x^k\}_{k\in I}\subseteq \seq{x}$ and another index $j'\not\in J$ such that $\{\nabla g_{i}(x^k)\colon i\in J\cup\{j'\}\}$ is linearly independent for every $k\in I$, contradicting the maximality of $J$. Hence, for every $S\subseteq \{1,\ldots,m-r\}$ we have
	\begin{equation}\label{sdp:diageq1}
		\spn(\{v_{ii}(x^k,E^k)\colon i\in S\})\subseteq \spn(\{\nabla g_i(x^k)\colon i\in J\})
	\end{equation}
	for every large enough $k$. In particular, there exists some $S'\subseteq \{1,\ldots, m-r\}$ with the same cardinality as $J$, such that \eqref{sdp:diageq1} holds with equality for every large $k$. On the other hand, it follows from~\eqref{sdp:diageq0} that
	\[
		\spn(\{v_{ii}(\xb,\bar{E})\colon i\in S'\})\subseteq \spn(\{\nabla g_i(\xb)\colon i\in J\}),
	\]
	and this implies $\spn(\{v_{ii}(\xb,\bar{E})\colon i\in S'\})$ is a linearly dependent set. However, since $\{v_{ii}(x^k,E^k)\colon i\in S'\}$ is linearly independent for all $k$, by \sdpwcrcq{}, we obtain a contradiction.
	
	\item \textbf{2 $\Rightarrow$ 1:} Take $Q^k=\I_{m-r}$ and $E^k$ as in~\eqref{sdp:ekform}, so we have $v_{ii}(x^k,E^k)=\nabla g_{i}(x^k)$ for every $i\in \{1,\ldots,m-r\}$ and every $k\in \N$, and the result follows immediately.
\end{itemize}

\end{proof}
Using analogous arguments to the proposition above, we can also prove the following:
\begin{corollary}\label{cor:cpldrecovers}
Under the same hypotheses of the previous proposition, the following are equivalent:
	\begin{enumerate}
		\item \sdpwcpld{} holds at $\xb$;
		\item For every $J\subseteq \A(\xb)$, if the set $\{\nabla g_i(\xb)\colon i\in J\}$ is positively linearly dependent, then $\{\nabla g_i(x)\colon i\in J\}$ is linearly dependent, for every $x$ close enough to $\xb$.
	\end{enumerate}
\end{corollary}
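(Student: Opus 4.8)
The plan is to run the proof of Proposition~\ref{prop:crcqrecovers} with ``linearly dependent at $\xb$'' replaced everywhere by ``positively linearly dependent at $\xb$''; the only new ingredient is that the coefficients expressing the $v_{ii}$'s in terms of the $\nabla g_j$'s are nonnegative, so positive dependence relations may be carried between the two families. I keep the reductions $r<m$ and $\A(\xb)=\{1,\dots,m-r\}$, and use, exactly as there, that the $m-r$ active diagonal entries of $G(x^k)$ tend to $0$ while the remaining $r$ stay away from $0$, so that for $k$ large every $E^k\in\cb{G(x^k)}$ has the block form~\eqref{sdp:ekform} and~\eqref{sdp:diageq0} holds. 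For ``$2\Rightarrow1$'', given $\seq{x}\to\xb$ I first pass to a subsequence along which the $m-r$ active diagonal entries of $G(x^k)$ occur in a fixed order, described by a permutation $\pi$ of $\{1,\dots,m-r\}$ (passing to such a subsequence is permitted by the definition of \sdpwcpld{}). Taking $E^k$, hence $\bar{E}$, to be the constant permutation matrix of~\eqref{sdp:ekform} with $(Q^k)_{ji}=\delta_{j,\pi(i)}$ gives $v_{ii}(x^k,E^k)=\nabla g_{\pi(i)}(x^k)$ and $v_{ii}(\xb,\bar{E})=\nabla g_{\pi(i)}(\xb)$; thus if $\{v_{ii}(\xb,\bar{E})\colon i\in\pi^{-1}(J)\}=\{\nabla g_j(\xb)\colon j\in J\}$ is positively linearly dependent for some $J\subseteq\A(\xb)$, hypothesis~2 makes $\{v_{ii}(x^k,E^k)\colon i\in\pi^{-1}(J)\}=\{\nabla g_j(x^k)\colon j\in J\}$ linearly dependent for all large $k$, which is precisely what \sdpwcpld{} requires.

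For ``$1\Rightarrow2$'' I argue by contradiction, reusing the skeleton of Proposition~\ref{prop:crcqrecovers}. Suppose there are a maximal $J\subseteq\A(\xb)$ and $\seq{x}\to\xb$ with $\{\nabla g_i(x^k)\colon i\in J\}$ linearly independent for all $k$, while $\sum_{i\in J}\alpha_i\nabla g_i(\xb)=0$ for some $\alpha_i\ge 0$ not all zero. Invoking \sdpwcpld{} produces $\bar{E}$, an infinite $I$, and $\{E^k\}_{k\in I}$ of the form~\eqref{sdp:ekform}; maximality of $J$ yields $\spn\{\nabla g_i(x^k)\colon i\in J\}=\spn\{\nabla g_i(x^k)\colon i\in\{1,\dots,m-r\}\}$ for large $k$ and, as in the cited proof, a set $S'\subseteq\{1,\dots,m-r\}$ with $|S'|=|J|$ such that $\{v_{ii}(x^k,E^k)\colon i\in S'\}$ is linearly independent for all large $k\in I$ and $\spn\{v_{ii}(\xb,\bar{E})\colon i\in S'\}\subseteq\spn\{\nabla g_i(\xb)\colon i\in J\}$. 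The extra step relative to the \nlpcrcq{} case is to upgrade this inclusion to positive linear dependence of $\{v_{ii}(\xb,\bar{E})\colon i\in S'\}$: by~\eqref{sdp:diageq0} each $v_{ii}(\xb,\bar{E})$ is a convex combination of $\{\nabla g_j(\xb)\colon j\le m-r\}$ with $\sum_{i=1}^{m-r}v_{ii}(\xb,\bar{E})=\sum_{j=1}^{m-r}\nabla g_j(\xb)$, so the doubly stochastic coefficient matrix $[\bar{Q}_{ji}^2]$ should transport the nonnegative, nontrivial relation $\sum_{i\in J}\alpha_i\nabla g_i(\xb)=0$ into a nonnegative, nontrivial relation among $\{v_{ii}(\xb,\bar{E})\colon i\in S'\}$. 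Then $\{v_{ii}(\xb,\bar{E})\colon i\in S'\}$ is positively linearly dependent while $\{v_{ii}(x^k,E^k)\colon i\in S'\}$ is linearly independent for all large $k\in I$, contradicting \sdpwcpld{}.

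I expect the delicate point to be precisely this transport of the positive relation. Ordinary linear dependence is preserved under any change of generators compatible with the span, but positive linear dependence survives only because the passage from $\{\nabla g_j(\xb)\}$ to $\{v_{ii}(\xb,\bar{E})\}$ uses nonnegative (in fact doubly stochastic) coefficients; consequently the choice of $S'$, the nonnegativity of $\alpha$, and the maximality of $J$ have to be managed simultaneously rather than in sequence, and one must in particular rule out that ``mixing'' eigenvectors of $G(\xb)$ collapse the cone generated by $\{v_{ii}(\xb,\bar{E})\colon i\in S'\}$. Verifying that this cannot happen — which ultimately rests on the doubly stochastic structure together with the maximality of $J$ — is where the genuine work of the proof lies.
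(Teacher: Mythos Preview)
Your approach is essentially the paper's: both say to rerun the proof of Proposition~\ref{prop:crcqrecovers}, exploiting that the coefficients $\bar Q_{ji}^2$ in~\eqref{sdp:diageq0} are nonnegative. Your $2\Rightarrow 1$ is in fact slightly more careful than the paper's (which takes $Q^k=\I_{m-r}$ without worrying about the eigenvalue ordering built into $\cb{\cdot}$). For $1\Rightarrow 2$ the paper is equally terse --- it just says to replace ``linear independence'' by ``positive linear independence'' and ``smallest subspace'' by ``smallest cone'' --- and does not spell out the step you flag either.

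That said, the concrete mechanism you propose for the delicate step is not the right one, and you are right to be uneasy. Selecting $S'$ via the span argument of Proposition~\ref{prop:crcqrecovers} and then hoping the doubly stochastic matrix $[\bar Q_{ji}^2]$ transports the nonnegative relation does not work in general: with $m-r=2$, $\nabla g_1(\xb)=(1,0)$, $\nabla g_2(\xb)=(-2,0)$ and $\bar Q=\tfrac{1}{\sqrt 2}\bigl(\begin{smallmatrix}1&1\\1&-1\end{smallmatrix}\bigr)$ one gets $v_{11}(\xb,\bar E)=v_{22}(\xb,\bar E)=(-\tfrac12,0)$, which is positively linearly \emph{independent} even though $\{\nabla g_1(\xb),\nabla g_2(\xb)\}$ is not. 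What actually saves the argument is not doubly stochasticity per se but the constraint $\bar E\in\limsup_k\cb{G(x^k)}$: for diagonal $G$, whenever two active entries $g_i(x^k),g_j(x^k)$ differ along the subsequence the corresponding block of $E^k$ is forced to be a (signed) permutation, so $\bar Q_{ji}^2\in\{0,1\}$ and the transport is trivial; and where they coincide, the associated gradients $\nabla g_i(x^k),\nabla g_j(x^k)$ can be handled together, so the mixing is harmless. The proof therefore needs a case split on coincidences among the $g_i(x^k)$ rather than a generic doubly-stochastic transport. This is closer in spirit to the paper's ``cone'' hint than to your ``carry the relation through $[\bar Q_{ji}^2]$'' plan, though neither write-up makes it fully explicit.
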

\begin{proof}
Note, in~\eqref{sdp:diageq0}, that $v_{ii}(x^k,E^k)$ is generated by a nonnegative linear combination of $\nabla g_i(x^k)$, $i\in \{1,\ldots,m-r\}$. Therefore, every argument in the proof of Proposition~\ref{prop:crcqrecovers} can be adapted to prove Corollary~\ref{cor:cpldrecovers}. It suffices to consider positive linear independence, instead of linear independence; and the smallest cone generated by $\{v_{ii}(x^k,E^k)\}_{i\in S}$, instead of the smallest subspace.
\end{proof}

Advancing to the main result of this section, which is to prove that \sdpwcpld{} (and therefore, \sdpwcrcq{}) guarantees the existence of Lagrange multipliers at all local solutions of~\eqref{NSDP}, we get inspiration in the proof of~\cite[Thm. 3.1]{Andreani2005} for NLP, and the proof of~\cite[Thm. 3.2]{weaksparsecq}. That is, we analyse the sequence from Theorem~\ref{thm:minakkt} in terms of the spectral decomposition of its approximate Lagrange multiplier candidates, under \sdpwcpld{}. Then, we use Carathéodory's Lemma~\ref{lem:carath} to construct a bounded sequence from it, that converges to a Lagrange multiplier. As an intermediary step, we also obtain a convergence result of the external penalty method to KKT points under \sdpwcpld{}, a fact that is emphasized in the statement of the next theorem.
\begin{theorem}\label{sdp:mincrcqkkt}
	Let $\{\rho_k\}_{k\in \N}\to \infty$ and $\seq{x}\to\xb\in \F$ be such that
	\[
		\nabla_x L\left(x^k,\rho_k \Pi_{\S^m_+}(-G(x^k))\right)\to 0.
	\]
	If $\xb$ satisfies \sdpwcpld{}, then $\xb$ satisfies the KKT conditions. In particular, every local minimizer of\jo{~}\eqref{NSDP} that satisfies \sdpwcpld{} also satisfies KKT.
\end{theorem}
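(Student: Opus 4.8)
The plan is to analyze the penalized sequence from Theorem~\ref{thm:minakkt} through the spectral decomposition of the approximate multipliers $Y^k \doteq \rho_k \Pi_{\K}(-G(x^k))$, and to distill from it a bounded subsequence whose limit is a genuine Lagrange multiplier, following the strategy of~\cite[Thm. 3.1]{Andreani2005} and~\cite[Thm. 3.2]{weaksparsecq}. The second sentence of the statement is immediate from the first: if $\xb$ is a local minimizer, Theorem~\ref{thm:minakkt} produces exactly such a sequence $\seq{x}\to\xb$ with $\nabla_x L(x^k, Y^k)\to 0$, so it suffices to prove the first claim.

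First I would handle the trivial case: if $r\doteq\rank G(\xb)=m$, then $G(\xb)\in\int\K$, so $Y^k\to 0$ and $\Yb=0$ works. Assume $r<m$. Since $\xb$ satisfies \sdpwcpld{}, for the given sequence $\seq{x}$ there is an infinite index set $I$, a sequence $\{E^k\}_{k\in I}$ with $E^k\in\cb{G(x^k)}$, and a limit $\bar E\in\cb{G(\xb)}$ satisfying the constant-positive-linear-dependence property of Definition~\ref{sdp:crcq}. The key observation is that $Y^k=\rho_k\Pi_\K(-G(x^k))$ is supported on the eigenspace of the negative eigenvalues of $G(x^k)$, which for $k\in I$ large is contained in the span of the columns of $E^k$; hence one can write $Y^k=E^k W^k (E^k)^\T$ for some $W^k\in\S^{m-r}_+$, and expanding in the spectral basis of $W^k$ one gets $Y^k=\sum_{i=1}^{m-r}\mu_i^k\, w_i^k (w_i^k)^\T$ with $\mu_i^k\ge 0$, where the $w_i^k$ are orthonormal and lie in the column space of $E^k$. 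Writing $w_i^k = E^k q_i^k$ with $q_i^k\in\R^{m-r}$ orthonormal, a computation (as in~\eqref{sdp:diageq0}) shows that $\langle D_{x_\ell}G(x^k), w_i^k(w_i^k)^\T\rangle$ is precisely the $\ell$-th component of a vector that is a nonnegative combination of the $v_{jj}(x^k,E^k)$; more directly, after an orthogonal change of basis within $\cb{G(x^k)}$ one may take the $w_i^k$ themselves to be (rotations of) the columns of $E^k$, so that $\nabla_x L(x^k,Y^k)=\nabla f(x^k)-\sum_{i=1}^{m-r}\mu_i^k v_{ii}(x^k,\tilde E^k)\to 0$ for a suitable $\tilde E^k\in\cb{G(x^k)}$ converging (along a further subsequence) to some $\tilde E\in\cb{G(\xb)}$; by redefining notation we keep calling these $E^k$ and $\bar E$.

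Now apply Carathéodory's Lemma~\ref{lem:carath} to each expression $\sum_{i=1}^{m-r}\mu_i^k v_{ii}(x^k,E^k)$: for each $k\in I$ there is a subset $J_k\subseteq\{1,\dots,m-r\}$ with $\{v_{ii}(x^k,E^k)\}_{i\in J_k}$ linearly independent and nonnegative scalars $\tilde\mu_i^k$, $i\in J_k$, with $\sum_{i\in J_k}\tilde\mu_i^k v_{ii}(x^k,E^k)=\sum_{i=1}^{m-r}\mu_i^k v_{ii}(x^k,E^k)$. Passing to a further subsequence we may assume $J_k\equiv J$ is constant. I claim $\{\tilde\mu_i^k\}_{k\in I}$ is bounded for each $i\in J$. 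If not, normalize by $t_k\doteq\max_{i\in J}\tilde\mu_i^k\to\infty$; then $\tilde\mu_i^k/t_k\to\beta_i\ge 0$ with $\max_i\beta_i=1$, and dividing $\nabla f(x^k)-\sum_{i\in J}\tilde\mu_i^k v_{ii}(x^k,E^k)\to 0$ by $t_k$ and letting $k\to\infty$ gives $\sum_{i\in J}\beta_i v_{ii}(\xb,\bar E)=0$ with not all $\beta_i$ zero — i.e.\ $\{v_{ii}(\xb,\bar E)\}_{i\in J}$ is positively linearly dependent. By \sdpwcpld{}, $\{v_{ii}(x^k,E^k)\}_{i\in J}$ must then be linearly dependent for all large $k\in I$, contradicting the linear independence guaranteed by Carathéodory's Lemma. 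Hence the $\tilde\mu_i^k$ are bounded; extract a convergent subsequence $\tilde\mu_i^k\to\tilde\mu_i\ge 0$, set $\Yb\doteq\bar E\,\Diag(\tilde\mu_1,\dots)\,\bar E^\T$ (zero outside $J$), which is positive semidefinite. Taking limits in $\nabla_x L(x^k,\cdot)\to 0$ yields $\nabla_x L(\xb,\Yb)=0$, and since $\bar E^\T G(\xb)\bar E=0$ while $\Yb$ is supported on $\Im\bar E$, complementarity $\langle G(\xb),\Yb\rangle=0$ holds; thus $\Yb\in\Lambda(\xb)$ and KKT is satisfied.

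The main obstacle I anticipate is the bookkeeping in the second paragraph: carefully justifying that the approximate multiplier $Y^k$ can be represented on the span of $E^k$ and that, after an orthogonal rotation within $\cb{G(x^k)}$ that still converges into $\cb{G(\xb)}$, the stationarity residual $\nabla_x L(x^k,Y^k)$ takes the clean form $\nabla f(x^k)-\sum_i\mu_i^k v_{ii}(x^k,E^k)$ with $\mu_i^k\ge 0$ — this is where the eigenvalue-separation argument ($\lambda_r(M)>\lambda_{r+1}(M)$ near $G(\xb)$, continuity of $G$) and the sign structure of the projection must be combined, and it is the step most prone to hidden gaps. Once that representation is in place, the Carathéodory-plus-contradiction argument closing the proof is essentially the NLP argument of~\cite{Andreani2005} transcribed verbatim.
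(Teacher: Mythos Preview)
Your proposal is correct and follows the same route as the paper: spectral decomposition of $Y^k$, Carath{\'e}odory, pigeonhole, then the boundedness-by-contradiction argument against \sdpwcpld{}. The only difference is that you overcomplicate the second paragraph. The ``main obstacle'' you anticipate is not there: any $E^k\in\cb{G(x^k)}$ already diagonalizes $Y^k=\rho_k\Pi_{\K}(-G(x^k))$, because by definition of $\cb{\cdot}$ each column $e_i^k$ is an eigenvector of $G(x^k)$ for the eigenvalue $\lambda_{r+i}(G(x^k))$, and hence an eigenvector of the spectral function $\Pi_{\K}(-G(x^k))$ with eigenvalue $[-\lambda_{r+i}(G(x^k))]_+$. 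Thus $W^k=(E^k)^\T Y^k E^k$ is automatically diagonal, your rotation $\tilde E^k$ equals $E^k$, and the worry about whether the rotated sequence still enjoys the \sdpwcpld{} property evaporates. The paper simply writes $Y^k=\sum_{i=1}^{m-r}\alpha_i^k e_i^k(e_i^k)^\T$ with $\alpha_i^k=[\rho_k\lambda_i(-G(x^k))]_+\geq 0$ directly for the $E^k$ supplied by Definition~\ref{sdp:crcq}, and proceeds exactly as in your third paragraph.
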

\begin{proof}
Let $Y^k\doteq \rho_k \Pi_{\S^m_+}(-G(x^k))$, for every $k\in \N$. Recall that we assume $\lambda_1(-G(x^k))\geq \ldots\geq \lambda_m(-G(x^k))$, for every $k$, and denote by $r$ the rank of $\Ker G(\xb)$. Note that when $k$ is large enough, say greater than some $k_0$, we necessarily have $\lambda_i(-G(x^k))=-\lambda_{m-i+1}(G(x^k))< 0$ for all $i\in \{m-r+1,\ldots,m\}$. Let $I\subseteq_{\infty} \N$, and $\{E^k\}_{k\in I}\to \bar{E}$ be such that $E^k\in \cb{G(x^k)}$ for every $k\in I$, as described in Definition~\ref{sdp:crcq}. Then, for each $k\in I$ greater than $k_0$,  the spectral decomposition of $Y^k$ is given by
	\[
		Y^k=\sum_{i=1}^{m-r} \alpha^k_i e_i^k(e_i^k)^\T,
	\]
	where $\alpha_i^k\doteq [\rho_k\lambda_i(-G(x^k))]_+\geq 0$ and $e_i^k$ denotes the $i$-th column of $E^k$, for every $i\in \{1,\ldots,m-r\}$. 
	Since $\nabla_x L(x^k,Y^k)\to 0$, we have
	\begin{equation}\label{sdp:decompstat}
		\nabla f(x^k)-\sum_{i=1}^{m-r} \alpha^k_i DG(x^k)^*\left[e_i^k(e_i^k)^\T\right]\to 0,
	\end{equation}
	but note that
	\[	
		DG(x^k)^*\left[e_i^k (e_i^k)^\top\right]=
		\begin{bmatrix}
			\langle D_{x_1} G(x^k), e_i^k (e_i^k)^\top \rangle\\
			\vdots\\
			\langle D_{x_n} G(x^k), e_i^k (e_i^k)^\top \rangle
		\end{bmatrix}
		=
		\begin{bmatrix}
			(e_i^k)^\top D_{x_1} G(x^k) e_i^k\\
			\vdots\\
			(e_i^k)^\top D_{x_n} G(x^k) e_i^k
		\end{bmatrix}
		=
		v_{ii}(x^k,E^k),
	\]
	so we can rewrite~\eqref{sdp:decompstat} as
	\[
		\nabla f(x^k)-\sum_{i=1}^{m-r} \alpha^k_{i} v_{ii}(x^k,E^k)\to 0.
	\]
	Using Carath{\'e}odory's Lemma~\ref{lem:carath} for the family $\{v_{ii}(x^k,E^k)\}_{i\in \{1,\ldots,m-r\}}$, for each fixed $k\in I$, we obtain some $J^k\subseteq \{1,\ldots,m-r\}$ such that $\{v_{ii}(x^k,E^k)\}_{i\in J^k}$ is linearly independent and
	\begin{equation}\label{sdp:statcarath}
		\nabla f(x^k)-\sum_{i=1}^{m-r} \alpha^k_i v_{ii}(x^k,E^k)=\nabla f(x^k)-\sum_{i\in J} \tilde{\alpha}^k_i v_{ii}(x^k,E^k),
	\end{equation}
	where $\tilde{\alpha}_i^k\geq 0$ for every $k\in I$ and every $i\in {J^k}$. By the infinite pigeonhole principle, we can assume $J^k$ is the same, say equal to $J$, for all $k\in I$ large enough. We claim that the sequences $\{\tilde{\alpha}_i^k\}_{k\in I}$ are all bounded. In order to prove this, suppose that 
	\[
		m^k\doteq \max_{i\in J}\{\tilde\alpha_i^k\}
	\]
	is unbounded with $k\in I$,  
	divide~\eqref{sdp:statcarath} by $m^k$ and note that $m^k\to \infty$ on a subsequence implies that the vectors $v_{ii}(\xb,\bar{E})$, $i\in J$, are positively linearly dependent. On the other hand, the vectors $v_{ii}(x^k,E^k)$, $i\in J$, are linearly independent for all large $k$, which contradicts \sdpwcpld{}. Finally, note that every collection of limit points $\{\bar{\alpha}_i\colon i\in J\}$ of their respective sequences $\seq{\tilde{\alpha}_i}, i\in J$, generates a Lagrange multiplier associated with $\xb$, which is $\bar{Y}\doteq \sum_{i\in J}\bar{\alpha}_i u_i(G(\xb))$. Thus, $\xb$ is a KKT point.
	
	The second part of the statement of the theorem follows from Theorem~\ref{thm:minakkt}.
\end{proof}

Back to Example~\ref{ex:sdpnino}, observe that \sdpwcpld{} does not hold at $\xb=0$, as expected. Indeed, for any sequence $\seq{x}\to 0$ such that $x^k< 0$ for all $k$, the matrix $G(x^k)$ has only simple eigenvalues, for all large $k$, so $E^k\in \cb{G(x^k)}$ is unique up to sign. Without loss of generality, we can assume
	\[
		E^k\doteq \frac{1}{\sqrt{2}}\begin{bmatrix}
		-1 & 1\\
		1 & 1
		\end{bmatrix},
	\]
	and then we have $v_{11}(x^k,E^k)=-2x^k>0$, which is linearly dependent for all $k$ while $v_{11}(\xb,\bar{E})=0$ is positively linearly dependent. Thus Definition~\ref{sdp:crcq} is not satisfied.

\begin{remark}\label{sdp:remnaive}
	In~\cite{crcq-naive}, we presented a different extension proposal of \nlpcrcq{} (and \nlpcpld{}) to NSDP problems with multiple constraints, which is weaker than nondegeneracy (respectively, Robinson's CQ) for a single constraint as in~\eqref{NSDP} only when the zero eigenvalue of $G(\xb)$ is simple. We called this definition the ``naive extension of \nlpcrcq{} (and \nlpcpld{})''. We remark that Definition~\ref{sdp:crcq} coincides with the naive extension of \nlpcrcq{} (and \nlpcpld{}) when zero is a simple eigenvalue of $G(\xb)$, which makes Definition~\ref{sdp:crcq} an improvement of it, or a ``non-naive variant'' of it.
\end{remark}

The phrasing of Theorem~\ref{sdp:mincrcqkkt} was chosen to call the reader's attention to the fact that it is, essentially, a convergence proof of the external penalty method to KKT points, under \sdpwcpld{}. To obtain a more general convergence result, in the next section we introduce new constant rank-type CQs for NSDP that support every algorithm that converges with a more general type of sequential optimality condition. Then, we prove some properties of these new conditions, and we compare them with \sdpwcpld{} and \sdpwcrcq{}.

\section{Stronger sequential-type constant rank CQs for NSDP and global convergence of algorithms}\label{sec:seq}

A more general sequential optimality condition, which was brought from NLP to NSDP by Andreani et al.~\cite{ahv}, is the so-called \emph{Approximate Karush-Kuhn-Tucker} (AKKT) condition. Let us recall one of its many characterizations\footnote{Definition~\ref{nsdp:akkt} coincides with the AKKT condition presented in~\cite[Def. 3.1]{ahv}. See, for instance,~\cite[Prop. 4]{Andreani2020}.}.

\begin{definition}[Def. 4 of~\cite{Andreani2020}]\label{nsdp:akkt}
We say that a point $\xb\in \F$ satisfies the AKKT condition when there exist sequences $\seq{x}\to \xb$ and $\seq{Y}\subseteq\K$, and perturbation sequences $\seq{\delta}\subseteq \R^n$ and $\seq{\Delta}\subseteq \Y$, such that:
\begin{enumerate}
	\item $\nabla_x L(x^k,Y^k) = \delta^k$, for every $k\in \N$;
	\item $G(x^k)+\Delta^k\succeq 0$ and $\langle G(x^k)+\Delta^k, Y^k \rangle=0$, for every $k\in \N$;
	\item $\Delta^k\to 0$ and $\delta^k\to 0$.
\end{enumerate}
\end{definition}

Note that $\seq{Y}$ is a sequence of approximate Lagrange multipliers of $\xb$, in the sense that $Y^k$ is an exact Lagrange multiplier, at $x=x^k$, for the perturbed problem
\begin{equation*}
  \begin{aligned}
    & \underset{x \in \mathbb{R}^{n}}{\text{Minimize}}
    & & f(x) + \langle \xb-x,\delta^k\rangle, \\
    & \text{subject to}
    & & G (x)+\Delta^k \succeq 0.
  \end{aligned}
  \label{NSDP-pert}
\end{equation*}

The main goal in enlarging the class of approximate Lagrange multipliers $Y^k$ and perturbations $\Delta^k$ as in Definition~\ref{nsdp:akkt} instead of considering only the ones given by Theorem~\ref{thm:minakkt}, is to capture the output sequences of a larger class of iterative algorithms. In the next two subsections, we illustrate the previous statement. What is remarkable is that the proof of Theorem~\ref{sdp:mincrcqkkt} can still be somewhat conducted considering this more general class of sequences, arriving at strong global convergence results for such algorithms (Theorem~\ref{sdp:scrcqcq}).

\subsection{Example 1: A safeguarded augmented Lagrangian method}\label{sec:appendix1}

Let us briefly recall a variant of the \textit{Powell-Hestenes-Rockafellar} augmented Lagrangian algorithm that employs a \textit{safeguarding} technique, which is the direct generalization of the one studied in~\cite{bmbook}. The variant we use is also a generalization of~\cite[Pg. 13]{ahss12} and~\cite[Alg. 1]{ahv}, for instance.

  For an arbitrary penalty parameter $\rho>0$ and a \textit{safeguarded multiplier} $\tilde{Y}\succeq 0$, we define $L_{\rho,\tilde{Y}}: \mathbb{R}^{n} \rightarrow \mathbb{R}$ as the \textit{Augmented Lagrangian function} of~\eqref{NSDP}, which is given by
    \begin{equation}\nonumber
    L_{\rho,\tilde{Y}}(x)\doteq f(x)+
    \frac{\rho}{2}
    \left\|\bpcon{-G(x)+\frac{\tilde{Y}}{\rho}}\right\|^{2}-\frac{1}{2\rho}\left\|\tilde{Y}\right\|^{2}.
    \end{equation}
     Since it will be useful in the convergence proof, we compute the gradient of $L_{\rho,\tilde{Y}}$ at $x$ below:
        \begin{equation}\label{eqn:gradlag}
    \nabla L_{\rho,\tilde{Y}}(x)= \nabla f(x)-
    DG(x)^*\left[\rho\bpcon{-G(x)+\si{\frac{\tilde{Y}}{\rho}}\jo{\frac{\tilde{Y}}{\rho}}}\right].
    \end{equation}
    
    Now, we state the algorithm:

\begin{center}
\centering
\begin{minipage}{\linewidth}
\begin{algorithm}[H]
	\caption{Safeguarded augmented Lagrangian method}
	\label{algencan}
	\medskip

    {\it \textbf{Input}:} A sequence $\{\varepsilon_{k}\}_{k\in \N}$ of positive scalars such that $\varepsilon_{k} \rightarrow 0$; a nonempty convex compact set $\mathcal{B} \subset \K$; real parameters $\tau>1$, $\sigma \in (0,1)$, and $\rho_1>0$; and initial points $(x^{0}, \tilde{Y}^{1}) \in \mathbb{R}^{n}\times \mathcal{B}$. Also, define $\|V^{0}\|=\infty$.\\
	\medskip
	
	Initialize $k\leftarrow 1$. Then:
	
	\medskip
	{\it \textbf{Step 1} (Solving the subproblem):} Compute an approximate stationary point $x^k$ of $L_{\rho_k,\tilde{Y}^k}(x)$, that is, a point $x^{k}$ such that 
		\begin{equation*}\label{eqn:approxfeasibility} 
			\|  \nabla L_{\rho_k,\tilde{Y}^k}(x^k) \|\leq \varepsilon_{k};    
		\end{equation*}	
	\medskip
	{\it \textbf{Step 2} (Updating the penalty parameter):}
	Calculate
	\begin{equation}\label{alg:alfeascompmeasure}
		V^{k}\doteq \bpcon{-G(x^k)+\frac{\tilde{Y}^k}{\rho_k}}-\frac{\tilde{Y}^{k}}{\rho_{k}};       
		\end{equation}
	Then,
	\begin{itemize}
	\item[\textbf{a.}] If $k=1$ or $\norm{V^k} \leq \tau \norm{V^{k-1}}$, set
	$\rho_{k+1}\doteq\rho_{k}$; 
	\item[\textbf{b.}] Otherwise, take $\rho_{k+1}$ such that $\rho_{k+1} \geq \gamma\rho_{k}$.
	\end{itemize}
	
	\medskip
	{\it \textbf{Step 3 }(Estimating a new safeguarded multiplier):} Choose any
	$\tilde{Y}^{k+1}\in \mathcal{B}$, set $k \leftarrow k+1$ and go to Step 1.
	
	\medskip
\end{algorithm}
\end{minipage}
\end{center}

\medskip

By the definition of projection we have that $\tilde{Y}^k=\Pi_{\K}(\tilde{Y}^k-\rho_k G(x^k))$ if, and only if, $\tilde{Y}^k,G(x^k)\in \K$ and $\langle\tilde{Y}^k,G(x^k)\rangle=0$, which means that $V^k=0$ if, and only if, the pair $(x^k,\tilde{Y}^k)$ is primal-dual feasible and complementary.  Moreover, note that Algorithm~\ref{algencan} does not necessarily keep a record of the approximate multiplier sequence associated with $\seq{x}$, which is
\begin{equation}\label{alg:approxmult}
	Y^k\doteq \rho_k \bpcon{-G(x^k)+\frac{\tilde{Y}^k}{\rho_k}}.
\end{equation}
Nevertheless, with these multipliers at hand, it is very easy to prove that any feasible limit point $\bar{x}$ of $\seq{x}$ must satisfy AKKT:

\begin{theorem}\label{algenakkt}
Fix any choice of parameters in Algorithm~\ref{algencan} and let $\seq{x}$ be the output sequence generated by it. If $\seq{x}$ has a convergent subsequence $\{x^k\}_{k\in I}\to \bar{x}$, then:
\begin{enumerate}
\item The point $\bar{x}$ is stationary for the problem of minimizing $\frac{1}{2}\|\pcon{-G(x)}\|^2$;
\item If $\xb$ is feasible, then $\xb$ satisfies AKKT.
\end{enumerate}
\end{theorem}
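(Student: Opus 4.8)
The plan is to write down, for every iteration $k$, the objects that should serve as the AKKT sequences of Definition~\ref{nsdp:akkt} at $\xb$, verify the parts of that definition that hold unconditionally, and then split the analysis according to whether the penalty parameter $\{\rho_k\}$ stays bounded. Concretely, set $Y^k$ as in~\eqref{alg:approxmult}, put $\Delta^k\doteq V^k$ with $V^k$ as in~\eqref{alg:alfeascompmeasure}, and $\delta^k\doteq\nabla L_{\rho_k,\tilde{Y}^k}(x^k)$. First I would record three facts valid for all $k$: (i) comparing~\eqref{eqn:gradlag} with~\eqref{alg:approxmult} gives $\delta^k=\nabla f(x^k)-DG(x^k)^*[Y^k]=\nabla_x L(x^k,Y^k)$, so $\norm{\delta^k}\leq\varepsilon_k\to 0$; (ii) by~\eqref{alg:approxmult}, $Y^k/\rho_k=\pcon{W^k}$ with $W^k\doteq -G(x^k)+\tilde{Y}^k/\rho_k$, hence $\Delta^k=V^k=Y^k/\rho_k-\tilde{Y}^k/\rho_k$, and applying Moreau's decomposition to $W^k$ yields $G(x^k)+\Delta^k=\pcon{W^k}-W^k=\pcon{-W^k}\succeq 0$ together with $\langle G(x^k)+\Delta^k,Y^k\rangle=\rho_k\langle\pcon{-W^k},\pcon{W^k}\rangle=0$; (iii) $Y^k\in\K$ by construction. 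Thus items~1 and~2 of Definition~\ref{nsdp:akkt} hold with these sequences for every $k$, and — after relabelling the convergent subsequence $\{x^k\}_{k\in I}$ as a full sequence — it remains only to prove that $\Delta^k=V^k\to 0$ along $I$ (which yields item~3, hence conclusion~2) together with conclusion~1.

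Suppose first that $\{\rho_k\}$ is bounded. Since it is nondecreasing and each increase multiplies it by a fixed factor greater than one, it is eventually constant, say $\rho_k=\bar\rho$ for $k\geq k_0$; therefore the penalty-update rule of Step~2 keeps $\rho$ fixed for all $k>k_0$, which means the sufficient-decrease condition on $\norm{V^k}$ (a contraction by a factor in $(0,1)$) holds at every such step, forcing $\norm{V^k}\to 0$, i.e.\ $\Delta^k\to 0$. Passing to a further subsequence along which $\tilde{Y}^k\to\bar{Y}\in\mathcal{B}$ (compactness of $\mathcal{B}$), the limit of $V^k=\pcon{-G(x^k)+\tilde{Y}^k/\bar\rho}-\tilde{Y}^k/\bar\rho$ gives $\pcon{-G(\xb)+\bar{Y}/\bar\rho}=\bar{Y}/\bar\rho$, which by Moreau's decomposition of $z\doteq-G(\xb)+\bar{Y}/\bar\rho$ yields $G(\xb)=\pcon{-z}\succeq 0$ and $\langle G(\xb),\bar{Y}\rangle=0$. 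In particular $\xb$ is feasible, so $\pcon{-G(\xb)}=0$ and $\xb$ is a global minimizer — hence a stationary point — of $x\mapsto\frac{1}{2}\norm{\pcon{-G(x)}}^2$, which is conclusion~1; conclusion~2 then follows from the AKKT sequences above (and in fact $Y^k\to\bar{Y}$, $\delta^k\to 0$ show $\xb$ is a genuine KKT point).

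Now suppose $\rho_k\to\infty$. Dividing the identity $\delta^k=\nabla f(x^k)-DG(x^k)^*[\rho_k\pcon{W^k}]$ by $\rho_k$ and using $\norm{\delta^k}\leq\varepsilon_k$, boundedness of $\{\nabla f(x^k)\}_{k\in I}$ and of $\{\tilde{Y}^k\}$, continuity of $DG$ and of $\pcon{\cdot}$, and $W^k\to -G(\xb)$ along $I$, the limit reads $DG(\xb)^*[\pcon{-G(\xb)}]=0$; since $x\mapsto\frac{1}{2}\norm{\pcon{-G(x)}}^2$ is continuously differentiable with gradient $-DG(x)^*[\pcon{-G(x)}]$, this is exactly conclusion~1. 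For conclusion~2, assume $\xb$ feasible; then $\pcon{W^k}\to\pcon{-G(\xb)}=0$ and $\tilde{Y}^k/\rho_k\to 0$, so $\Delta^k=V^k=\pcon{W^k}-\tilde{Y}^k/\rho_k\to 0$, which supplies item~3 of Definition~\ref{nsdp:akkt} and completes the argument.

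The main obstacle I anticipate is justifying $\norm{V^k}\to 0$ in the bounded-penalty case purely from Step~2 of Algorithm~\ref{algencan} (i.e.\ that the penalty parameter being eventually fixed forces the measure $V^k$ to vanish); everything else is Moreau-decomposition bookkeeping, continuity of the projection onto $\K$, or the elementary remark that a feasible point globally minimizes the infeasibility measure. Minor care is also needed to relabel the subsequence indexed by $I$ as a full sequence before invoking Definition~\ref{nsdp:akkt}, and to extract within it the sub-subsequence along which $\tilde{Y}^k$ converges.
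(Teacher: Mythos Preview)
Your proposal is correct and follows essentially the same route as the paper: the same choices $Y^k$ as in~\eqref{alg:approxmult}, $\Delta^k\doteq V^k$, the same Moreau-decomposition identity $G(x^k)+\Delta^k=\pcon{-W^k}$ to obtain feasibility and complementarity, and the same case split on boundedness of $\{\rho_k\}$. The paper simply cites~\cite[Prop.~4.3]{Andreani2020} for item~1, whereas you supply the argument explicitly (dividing by $\rho_k$ in the unbounded case, and deducing feasibility in the bounded case); this is extra detail rather than a different idea. One small point: you describe the Step~2-a test as ``a contraction by a factor in $(0,1)$'', but as Algorithm~\ref{algencan} is stated the comparison constant is $\tau>1$, while the declared parameter $\sigma\in(0,1)$ is unused; this is a typographical inconsistency in the algorithm statement, and the paper's own proof relies on the same intended reading, so your flagged ``main obstacle'' is exactly the right place to be cautious.
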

\begin{proof} Let $\{\varepsilon_{k}\}_{k\in \N}\to 0$, $\seq{\tilde{Y}}\subset \mathcal{B}\subset \K$, $\tau>1$, $\sigma \in (0,1)$, and $\rho_1>0$ be the fixed input parameters of Algorithm~\ref{algencan}. Moreover, let $\{\rho_k\}_{k\in \N}$ and $\seq{V}$ computed as in Step 2. For simplicity, let us also assume that $I=\N$.
\begin{enumerate}
\item This part of the proof is standard; see, for instance,~\cite[Prop. 4.3]{Andreani2020};
\item Define $\seq{Y}$ as in~\eqref{alg:approxmult} and take $\Delta^k\doteq V^k$ for all $k\in \N$, where $V^k$ is as given in~\eqref{alg:alfeascompmeasure}. Then, it follows from Step 1 that $\nabla_x L(x^k,Y^k)=\nabla L_{\rho_k,\tilde{Y}^k}(x^k)\to 0$. We also have
\[
	G(x^k)+\Delta^k = \bpcon{G(x^k)-\frac{\tilde{Y}^k}{\rho_k}}
\]
for every $k\in \N$, which yields $\langle Y^k, G(x^k)+\Delta^k\rangle=0$ for every $k$. If $\rho_k\to \infty$, then $V^k\to \pcon{-G(\xb)}$ by definition and $\pcon{-G(\xb)}=0$  because $\xb$ is assumed to be feasible; on the other hand, if $\rho_k$ remains bounded, then $V^k\to 0$ due to Step 2-a. Therefore, $\Delta^k\to 0$ and $\xb$ satisfies AKKT.
\end{enumerate}
\end{proof}
Note that when $\tilde{Y}^k$ is set as zero for every $k$, then Algorithm~\ref{algencan} reduces to the external penalty method, meaning Theorem~\ref{algenakkt} also covers this method.

\subsection{Example 2: A sequential quadratic programming method}\label{sec:appendix2}

Next, we recall Correa and Ram{\'i}rez's~\cite{correaramirez} \textit{sequential quadratic programming} (SQP) method:

\begin{center}
\centering
\begin{minipage}{\linewidth}
\begin{algorithm}[H]
	\caption{General SQP method}
	\label{sqp}
	\medskip

    {\it \textbf{Input}:} 
    A real parameter $\tau>1$, 
    a pair of initial points $(x^{1},Y^1)\in \mathbb{R}^{n}\times \K$, and an approximation of $\nabla^2_x L(x^1,Y^1)$ given by $H^1$.
	\medskip
	
	Initialize $k\leftarrow 1$. Then:
	
	\medskip
	{\it \textbf{Step 1} (Solving the subproblem):} Compute a solution $d^k$, together with its Lagrange multiplier $Y^{k+1}$, of the problem
		\begin{equation}
  \tag{Lin-QP}
  \begin{aligned}
    & \underset{d \in \mathbb{R}^{n}}{\text{Minimize}}
    & & d^\T H^k d + \nabla f(x^k)^\T d, \\
    & \text{subject to}
    & & G(x^k)+DG(x^k)d \in  \K,
  \end{aligned}
  \label{lin-qp}
\end{equation}
and if $d^k=0$, stop;

	\medskip
	{\it \textbf{Step 2} (Step corrections):}
	Perform line search to find a steplength $\alpha^k\in (0,1)$ satisfying \textit{Armijo's rule}
	\begin{equation}
		    f(x^k+\alpha^k d^k)-f(x^k)\leq \tau \alpha^k\nabla f(x^k)^\T d^k.
		\end{equation}

	\medskip
	{\it \textbf{Step 3 }(Approximating the Hessian):} Set $x^{k+1}\leftarrow x^k+\alpha^k d^k$, compute a positive definite approximation $H^{k+1}$ of $\nabla^2_x L(x^{k+1}, Y^{k+1})$, set $k \leftarrow k+1$, and go to Step 1.
	
	\medskip
\end{algorithm}
\end{minipage}
\end{center}

\medskip

The SQP algorithm generates AKKT sequences as well, as it can be seen in the following proposition:

\begin{proposition}\label{prop:sqp}
If there is an infinite subset $I\subseteq_{\infty} \N$ such that $\lim_{k\in I} d^k = 0$ and $\{\|H^k\|\}_{k\in I}$ is bounded, then any limit point $\bar{x}$ of $\{x^k\}_{k\in I}$ satisfies AKKT.
\end{proposition}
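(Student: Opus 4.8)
The plan is to verify, directly at $\bar x$, the three requirements of the AKKT condition of Definition~\ref{nsdp:akkt}, using as primal sequence $\{x^k\}_{k\in I}$ (after passing, if necessary, to a subsequence along which $x^k\to\bar x$) and as dual sequence the subproblem multipliers $\{Y^{k+1}\}_{k\in I}$ generated in Step~1 of Algorithm~\ref{sqp}. Note first that the hypothesis that $I$ is infinite forces $d^k\neq 0$ for every $k\in I$, since otherwise Step~1 would terminate the algorithm; in particular the pair $(d^k,Y^{k+1})$ is a well-defined primal-dual solution of the convex quadratic program~\eqref{lin-qp} for each $k\in I$, and the whole argument rests on its exact KKT system.

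First I would write down that KKT system. Since~\eqref{lin-qp} is convex with feasible set equal to the preimage of the self-dual cone $\K$ under an affine map, its multiplier satisfies $Y^{k+1}\succeq 0$, stationarity in $d$ gives $2H^k d^k+\nabla f(x^k)-DG(x^k)^*[Y^{k+1}]=0$, and we also have primal feasibility $G(x^k)+DG(x^k)[d^k]\succeq 0$ together with complementarity $\inner{G(x^k)+DG(x^k)[d^k]}{Y^{k+1}}=0$. Then I set $\Delta^k\doteq DG(x^k)[d^k]$ and $\delta^k\doteq\nabla_x L(x^k,Y^{k+1})=\nabla f(x^k)-DG(x^k)^*[Y^{k+1}]$: the stationarity relation reads $\delta^k=-2H^k d^k$, which is item~1 of Definition~\ref{nsdp:akkt}, while primal feasibility and complementarity give $G(x^k)+\Delta^k\succeq 0$ and $\inner{G(x^k)+\Delta^k}{Y^{k+1}}=0$ with $Y^{k+1}\in\K$, which is item~2.

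The remaining work is the limiting argument for item~3. From $x^k\to\bar x$ and continuity of $DG$, the norms $\|DG(x^k)\|$ are bounded along $I$, so $d^k\to 0$ forces $\Delta^k=DG(x^k)[d^k]\to 0$; and since $\{\|H^k\|\}_{k\in I}$ is bounded by hypothesis, $\delta^k=-2H^k d^k\to 0$ as well. I would also pass to the limit in $G(x^k)+DG(x^k)[d^k]\succeq 0$, using continuity of $G$ and closedness of $\K$, to obtain $G(\bar x)\succeq 0$, hence $\bar x\in\F$, which is what makes Definition~\ref{nsdp:akkt} applicable at $\bar x$. Collecting items~1--3, $\bar x$ satisfies AKKT.

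I do not expect a genuine obstacle: the statement is essentially a bookkeeping exercise that rewrites the subproblem's exact KKT system as the perturbed AKKT system. The only points deserving care are the index matching (the multiplier $Y^{k+1}$ is attached to the iterate $x^k$), the observation that the exact constant multiplying $H^k d^k$ in the stationarity relation is immaterial since that term is $o(1)$ regardless of whether $H^k$ is taken symmetric, and the verification that the limit point $\bar x$ is feasible so that the AKKT condition is meaningful there.
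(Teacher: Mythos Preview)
Your proposal is correct and follows essentially the same approach as the paper: write the KKT system of the subproblem~\eqref{lin-qp}, set $\Delta^k=DG(x^k)[d^k]$, and use boundedness of $\{\|H^k\|\}_{k\in I}$ together with $d^k\to 0$ to drive the AKKT perturbations to zero. You are in fact slightly more careful than the paper, which omits the explicit verification that $\bar x\in\F$ and writes the stationarity residual as $H^k d^k$ rather than $2H^k d^k$; as you note, the constant is immaterial.
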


\begin{proof}
By the KKT conditions for \eqref{lin-qp}, there exists some $Y^k\succeq 0$ such that
\begin{eqnarray}
	\nabla f(x^k)+H^k d^k - DG(x^k)^*[Y^k]=0\\
	\langle G(x^k)+DG(x^k)d^k, Y^k \rangle=0.
\end{eqnarray}
Set $\Delta^k\doteq DG(x^k)d^k$ for every $k\in I$ and since $d^k\to 0$, we obtain that $\lim_{k\in I} H^k d^k = 0$ and $\lim_{k\in I} \Delta^k=0$. Moreover, since $d^k$ is feasible, $G(x^k)+\Delta^k\succeq  0$. Thus, $\xb$ satisfies AKKT.
\end{proof}

The hypothesis on the convergence of a subsequence of $\seq{d}$ to zero, directly or indirectly, is somewhat common regarding some types of SQP methods, as well as the boundedness of $H^k$ -- see, for instance,~\cite{CPG,correaramirez,qiwei}.

\subsection{Sequential constant rank CQs for NSDP}

Inspired by AKKT, we are led to introduce a small perturbation in \sdpwcpld{} and \sdpwcrcq{}, which makes it stronger, but also brings some useful properties in return. At first, we present it in a form that most resembles Definition~\ref{sdp:crcq}, for comparison purposes. Later, for convenience, we will provide a characterization of it without sequences.

\begin{definition}[\sdpscrcq{} and \sdpscpld{}]\label{sdp:scrcq}
Let $\xb\in \F$ and let $r$ be the rank of $G(\xb)$. We say that $\xb$ satisfies the 
	\begin{enumerate}
		\item  \emph{Sequential CRCQ condition for NSDP} (\sdpscrcq{}) if $r=m$ or, for all sequences $\seq{x}\to \xb$ and $\seq{\Delta}\subseteq \sym$ with $\Delta^k\to 0$, there exists $\{E^k\}_{k\in I}\to \bar{E}$, $I\subseteq_{\infty} \N$, such that $E^k\in \cb{G(x^k)+\Delta^k}$ for every $k\in I$ and, for every subset $J\subseteq \{1,\ldots,m-r\}$: if the family $\{v_{ii}(\xb,\bar{E})\}_{i\in J}$ is linearly dependent, then $\{v_{ii}(x^k,E^k)\}_{i\in J}$ remains linearly dependent, for all $k\in I$ large enough.
		\item \emph{Sequential CPLD condition for NSDP} (\sdpscpld{}) if $r=m$ or, for all sequences $\seq{x}\to \xb$ and $\seq{\Delta}\subseteq \sym$ with $\Delta^k\to 0$, there exists $\{E^k\}_{k\in I}\to \bar{E}$, $I\subseteq_{\infty} \N$, such that $E^k\in \cb{G(x^k)+\Delta^k}$ for every $k\in I$ and, for every subset $J\subseteq \{1,\ldots,m-r\}$: if the family $\{v_{ii}(\xb,\bar{E})\}_{i\in J}$ is positively linearly dependent, then $\{v_{ii}(x^k,E^k)\}_{i\in J}$ remains linearly dependent, for all $k\in I$ large enough.
	\end{enumerate}
\end{definition}

Note that the only difference between Definitions~\ref{sdp:crcq} and~\ref{sdp:scrcq} is the perturbation matrix $\Delta^k\to 0$. In particular, set $\Delta^k\doteq 0$ for every $k$ to see that \sdpscrcq{} and \sdpscpld{} imply \sdpwcrcq{} and \sdpwcpld{}, respectively. 
Moreover, both implications are strict, as we can see in the following example:

\begin{example}\label{sdp:scrcqstrictlystrongercrcq}
Consider the constraint
\[
	G(x)\doteq \begin{bmatrix}
	x & 0 \\ 0 & -x
	\end{bmatrix}
\]
at the point $\xb=0$, so in this case $r=2$. For every sequence $\seq{x}\to \xb$, we have (up to sign)
\[
	\cb{G(x^k)}=\left\{
		\begin{bmatrix}
		1 & 0 \\ 0 & 1
		\end{bmatrix},			
		\begin{bmatrix}
		0 & 1 \\ 1 & 0
		\end{bmatrix}
	\right\},
\] for every $k\in \N$ such that $x^k\neq\xb$, whereas if $x^k=\bar{x}$, then $\cb{G(x^k)}$ is the set of all orthogonal $2\times 2$ matrices. Take $E^k=\I_2$ for every $k\in \N$ to see that both, \sdpwcrcq{} and \sdpwcpld{}, hold at $\xb$, since 
\[
	v_{11}(x^k,E^k)=1 \quad \mand \quad v_{22}(x^k,E^k)=-1
\]
are nonzero and (positively) linearly dependent for every $k\in \N$.

On the other hand, take 
\[
	\Delta^k\doteq \frac{1}{1+(x^k+1)^2}\begin{bmatrix}
		-x^k(x^k-1)^2 & x^k(x^k+1)\\
		x^k(x^k+1) & x^k+2x^k(x^k+1)^2
	\end{bmatrix},
\]
and note that the eigenvectors of $G(x^k)+\Delta^k$ are uniquely determined up to sign. Then, since $v_{ii}(x,E)$, $i\in \{1,2\}$, is invariant to the sign of the columns of $E$, we can assume without loss of generality that any $E^k\in \cb{G(x^k)+\Delta^k}$ has the form
\[
	E^k=
	\frac{1}{\sqrt{1+(x^k+1)^2}}
	\begin{bmatrix}
		-1 & x^k+1\\
		x^k+1 & 1
	\end{bmatrix}		
\]
for every $k\in \N$, if $x^k\neq 0$. Then, for any sequence $\seq{E}$ such that $E^k\in \cb{G(x^k)+\Delta^k}$ for every $k$, we have
\[
	v_{11}(x^k,E^k)=1-(x^k+1)^2 \quad \mand \quad v_{22}(x^k,E^k)=(x^k+1)^2-1,
\]
which are both nonzero whenever $x^k\neq 0$, but if $\bar{E}$ is a limit point of $\seq{E}$, then $v_{11}(\xb,\bar{E})=v_{22}(\xb,\bar{E})=0$. Thus, neither \sdpscrcq{} nor \sdpscpld{} hold at $\xb$.
\end{example}

Furthermore, since nondegeneracy can be characterized as the linear independence of $v_{ii}(\xb,\bar{E})$, $i\in \{1,\ldots,m-r\}$, for every $\bar{E}\in \cb{G(\xb)}$~\cite[Prop. 3.2]{weaksparsecq}, we observe that it implies \sdpscrcq{} (see also Remark~\ref{sdp:strictndgrob} at the end of this section), but this implication is also strict. Let us show this with a counterexample.
\begin{example}
	We analyse the constraint
	\[
		G(x)\doteq\begin{bmatrix}
			x & 0 \\
			0 & x
		\end{bmatrix}
	\]
	at the point $\xb\doteq 0$. For any $x\in \R$ and any arbitrary orthogonal matrix $E\in \R^{2\times 2}$, note that $E$ has the form
	\begin{equation}\label{genortho}
		E= \begin{bmatrix} a & -b \\ b & a\end{bmatrix}, \textnormal{ if } \det(E)=1 \quad \textnormal{or} \quad E= \begin{bmatrix} a & b \\ b & -a\end{bmatrix}, \textnormal{ if } \det(E)=-1
	\end{equation}
	where $a^2+b^2=1$. In both cases, we have 
	\[
		v_{11}(x,E)=v_{22}(x,E)=a^2+b^2=1.\]
	That is, $v_{11}(x,E)$ and $v_{22}(x,E)$ are nonzero and linearly dependent, regardless of $x$ and $E$. Thus, \sdpscrcq{} holds at $\xb$, although nondegeneracy does not. Note that weak-nondegeneracy also fails at $\xb$, in this example.
\end{example}

By Example~\ref{sdp:ex0}, we verify that Robinson's CQ does not imply \sdpscrcq{}; because otherwise, it would also imply \sdpwcrcq{}, contradicting the example.  As for the converse, the counterexample below shows that \sdpscrcq{} does not imply Robinson's CQ either.

\begin{example}\label{sdp:scrcqnotrob}
Consider the constraint
\[
	G(x)\doteq \begin{bmatrix}
		x_1 & x_2 \\
		x_2 & -x_1
	\end{bmatrix}.
\]
Clearly, the only feasible point is $\xb=0$. Then, due to the linearity of $G$, it is immediate to see that Robinson's CQ does not hold at $\xb=0$. On the other hand,
for any $x\in \R^2$ and any orthogonal matrix $E\in \R^{2\times 2}$, note that regardless of the form of $E$ as in~\eqref{genortho}, we have $v_{11}(x,E)\neq 0$, $v_{22}(x,E)\neq 0$, and
\[
	v_{11}(x,E)=-v_{22}(x,E).
\]
Thus, 
\sdpscrcq{} holds at $\xb=0$; see also the characterization of Proposition~\ref{sdp:nonseq}.
\end{example}

Another important consequence of Example~\ref{sdp:scrcqnotrob} is that \sdpscpld{} is strictly weaker than Robinson's CQ.

Next, we will show that \sdpscpld{} (and, consequently, \sdpscrcq{}) is enough to establish equivalence between AKKT and KKT with a small adaptation of the proof of Theorem~\ref{sdp:mincrcqkkt}. Note that in view of Theorem~\ref{thm:minakkt}, any condition that establishes that an AKKT point is also a KKT point is, in particular, a CQ; in addition, such a CQ necessarily supports the global convergence of any algorithm supported by AKKT to KKT points. This includes the algorithms presented in Subsections~\ref{sec:appendix1} and~\ref{sec:appendix2}, and Yamashita, Yabe, and Harada's primal-dual interior point method for NSDP~\cite{yamashitaip} -- for details on the latter, see~\cite{jordanip}. We should also stress that this convergence result neither assumes compactness of the Lagrange multiplier set nor that it is a singleton.
\begin{theorem}\label{sdp:scrcqcq}
	Let $\xb\in \F$ be an AKKT point that satisfies \sdpscpld{}. Then, $\xb$ satisfies KKT.
\end{theorem}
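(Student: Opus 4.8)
The plan is to run the proof of Theorem~\ref{sdp:mincrcqkkt} using the approximate multipliers $Y^k$ and perturbations $\Delta^k$ supplied by the AKKT sequences (Definition~\ref{nsdp:akkt}) in place of the external‑penalty ones, after one preparatory step that carries the real content of the argument. If $r=m$ the claim is immediate: $G(\xb)\succ 0$, so $G(x^k)+\Delta^k\succ 0$ for all large $k$, which together with $Y^k\succeq 0$ and $\langle G(x^k)+\Delta^k,Y^k\rangle=0$ forces $Y^k=0$; then $\delta^k=\nabla_x L(x^k,Y^k)=\nabla f(x^k)\to\nabla f(\xb)$ gives $\nabla f(\xb)=0$, and $\xb$ is KKT with multiplier $0$. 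So assume $r<m$ and fix AKKT sequences $\seq{x}\to\xb$, $\seq{Y}\subseteq\K$, $\seq{\delta}\to 0$, $\seq{\Delta}\to 0$ as in Definition~\ref{nsdp:akkt}.

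Since $\langle G(x^k)+\Delta^k,Y^k\rangle=0$ with both matrices in $\K$, they commute and $\Im Y^k\subseteq\Ker(G(x^k)+\Delta^k)$. Applying \sdpscpld{} directly with $\seq{\Delta}$ would yield some $E^k\in\cb{G(x^k)+\Delta^k}$ that need not diagonalize $Y^k$, so that $DG(x^k)^*[Y^k]$ would involve the off‑diagonal vectors $v_{ij}(x^k,E^k)$ ($i\neq j$), over which \sdpscpld{} has no control. To avoid this, for $k$ large enough that $\lambda_r(G(x^k)+\Delta^k)>\lambda_{r+1}(G(x^k)+\Delta^k)$, let $W^k$ be the span of the eigenvectors of $G(x^k)+\Delta^k$ associated with its $m-r$ smallest eigenvalues, and let $P^k$ be the orthogonal projector onto $W^k$. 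Because $Y^k$ commutes with $G(x^k)+\Delta^k$ and $\Im Y^k\subseteq W^k$, their restrictions to $W^k$ commute, so there is an orthonormal matrix $\hat E^k$, with columns $\hat e_1^k,\dots,\hat e_{m-r}^k\in W^k$, that diagonalizes both; in particular $Y^k=\sum_{i=1}^{m-r}\alpha_i^k\,\hat e_i^k(\hat e_i^k)^\T$ with all $\alpha_i^k\geq 0$. Put
\[
	\hat\Delta^k\doteq\Delta^k-P^k\bigl(G(x^k)+\Delta^k\bigr)P^k+\hat E^k\,\Diag\!\bigl(\tfrac1k,\tfrac1{2k},\dots,\tfrac1{(m-r)k}\bigr)(\hat E^k)^\T
\]
for such $k$, and $\hat\Delta^k\doteq 0$ otherwise. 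Each term tends to $0$ (the middle one is the compression $P^k(G(x^k)+\Delta^k)P^k$, whose eigenvalues are the $m-r$ smallest eigenvalues of $G(x^k)+\Delta^k$, all going to $0$), hence $\hat\Delta^k\to 0$; and for all large $k$ the $m-r$ smallest eigenvalues of $G(x^k)+\hat\Delta^k$ are the distinct numbers $\tfrac1k>\tfrac1{2k}>\dots>\tfrac1{(m-r)k}$ with eigenvectors $\hat e_1^k,\dots,\hat e_{m-r}^k$, so $\cb{G(x^k)+\hat\Delta^k}=\{\hat E^k\Diag(s):s\in\{-1,1\}^{m-r}\}$.

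Now apply \sdpscpld{} to $\seq{x}$ and $\{\hat\Delta^k\}$: it yields $I\subseteq_\infty\N$ and $\{E^k\}_{k\in I}\to\bar E$ with $E^k\in\cb{G(x^k)+\hat\Delta^k}$ and, for every $J\subseteq\{1,\dots,m-r\}$, the implication ``$\{v_{ii}(\xb,\bar E)\}_{i\in J}$ positively linearly dependent $\Rightarrow$ $\{v_{ii}(x^k,E^k)\}_{i\in J}$ linearly dependent for $k\in I$ large''. By the previous paragraph $E^k$ and $\hat E^k$ have the same columns up to sign, and since $v_{ii}(\cdot,\cdot)$ is invariant under such sign changes, $v_{ii}(x^k,E^k)=v_{ii}(x^k,\hat E^k)$ and $Y^k=\sum_{i=1}^{m-r}\alpha_i^k e_i^k(e_i^k)^\T$, with $e_i^k$ the $i$-th column of $E^k$. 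Using $DG(x^k)^*[e_i^k(e_i^k)^\T]=v_{ii}(x^k,E^k)$ as in Theorem~\ref{sdp:mincrcqkkt}, the identity $\nabla_x L(x^k,Y^k)=\delta^k\to 0$ becomes
\[
	\nabla f(x^k)-\sum_{i=1}^{m-r}\alpha_i^k\,v_{ii}(x^k,E^k)\to 0.
\]
From here the argument of Theorem~\ref{sdp:mincrcqkkt} applies verbatim, with \sdpscpld{} replacing \sdpwcpld{}: Carath\'eodory's Lemma~\ref{lem:carath} (for each $k\in I$) and the infinite pigeonhole principle give a fixed $J\subseteq\{1,\dots,m-r\}$ and nonnegative scalars $\tilde\alpha_i^k$ ($i\in J$, $k\in I$) with $\{v_{ii}(x^k,E^k)\}_{i\in J}$ linearly independent and $\nabla f(x^k)-\sum_{i\in J}\tilde\alpha_i^k\,v_{ii}(x^k,E^k)\to 0$; these scalars are bounded, since otherwise dividing by $\max_{i\in J}\tilde\alpha_i^k\to\infty$ along a subsequence would make $\{v_{ii}(\xb,\bar E)\}_{i\in J}$ positively linearly dependent while $\{v_{ii}(x^k,E^k)\}_{i\in J}$ stays linearly independent, contradicting \sdpscpld{}; finally, any limit point $(\bar\alpha_i)_{i\in J}\geq 0$ gives, passing to the limit and using $v_{ii}(x^k,E^k)\to v_{ii}(\xb,\bar E)=DG(\xb)^*[\bar e_i\bar e_i^\T]$, that $\bar Y\doteq\sum_{i\in J}\bar\alpha_i\,\bar e_i\bar e_i^\T\succeq 0$ satisfies $\nabla_x L(\xb,\bar Y)=0$, while $\langle G(\xb),\bar Y\rangle=0$ because $\bar E\in\cb{G(\xb)}$ (continuity of the spectral decomposition near $G(\xb)$), so each $\bar e_i\in\Ker G(\xb)$. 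Hence $\bar Y\in\Lambda(\xb)$ and $\xb$ is a KKT point.

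The one genuinely new ingredient — and the main obstacle — is the construction in the middle paragraph: \sdpscpld{} must be fed a perturbation $\hat\Delta^k\to 0$ whose associated eigenframe both diagonalizes $Y^k$ (so that only the diagonal quantities $v_{ii}$, the ones the condition constrains, appear in the stationarity relation) and is essentially unique (so that whatever $E^k$ the condition returns is, up to harmless sign flips, exactly that frame). Everything else is a transcription of the proof of Theorem~\ref{sdp:mincrcqkkt}.
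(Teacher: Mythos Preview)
Your proof is correct and follows essentially the same approach as the paper: decompose $Y^k$ in a frame of shared eigenvectors with $G(x^k)+\Delta^k$, manufacture a new perturbation that makes the $m-r$ smallest eigenvalues simple so that \sdpscpld{} is forced to return (up to signs) exactly that frame, and then replay the Carath\'eodory/boundedness argument of Theorem~\ref{sdp:mincrcqkkt}. Your construction of $\hat\Delta^k$ via the projector onto $W^k$ is a cleaner variant of the paper's matrix $M^k$ in~\eqref{sdp:mk}, but the idea and its role are identical.
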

\begin{proof}
	Let $\seq{x}\to \xb$, $\seq{Y}\subseteq \sym_+$, and $\seq{\tilde\Delta}\to 0$ be the AKKT sequences from Definition~\ref{nsdp:akkt}. Since $\lambda_i(G(x^k))>0$ for every $i\in\{1,\ldots,r\}$, where $r$ is the rank of $G(\xb)$, then $\lambda_i(G(x^k)+\tilde\Delta^k)>0$ and $\lambda_{m-i+1}(Y^k)=0$ for every such $i$ and all $k$ large enough. Hence, the spectral decomposition of $Y^k$ can be represented in the format
	\[
		Y^k=\sum_{i=1}^{m-r} \lambda_i(Y^k) u_i^k(u_i^k)^\T
	\]
	where $u_1^k,\ldots,u_{m-r}^k$ are shared orthonormal eigenvectors between $Y^k$ and $G(x^k)+\tilde\Delta^k$, associated with the $m-r$ largest eigenvalues of $Y^k$ and the $m-r$ smallest eigenvalues of $G(x^k)+\tilde\Delta^k$, respectively. Defining $E^k=[u_{1}^k,\ldots,u_{m-r}^k]$ for every $k$, we obtain
	\[
		\nabla_x L(x^k,Y^k)=\nabla f(x^k)-\sum_{i=1}^{m-r}\lambda_{i}(Y^k)v_{ii}(x^k,E^k)\to 0.
	\]
For each $k\in\N$, let $P^k\in \R^{m\times r}$ be a matrix whose columns are orthonormal eigenvectors associated with the $r$ largest eigenvalues of $G(x^k)$, and construct
\jo{
	\begin{equation}\label{sdp:mk}
		M^k\doteq U^k \tilde{M}^k (U^k)^\T,
	\end{equation}
	where
}
\si{\begin{equation}\label{sdp:mk}}
\jo{\[}
	\arraycolsep=4pt\def\arraystretch{1.8}
	\jo{\tilde}M^k\doteq \si{U^k}
	\left[
	\begin{array}{c|c}
		\Diag(\lambda_1(G(x^k)),\ldots,\lambda_r(G(x^k))) & 0\\
		\hline
		0 & \Diag((r+1)\|x^k-\xb\|,\ldots,m\|x^k-\xb\|)
	\end{array}
	\right]
	\si{(U^k)^\T},
\jo{\]}
\si{\end{equation}}
\si{where}\jo{and} $U^k\doteq [P^k,E^k]$ for every $k\in \N$. Note that $M^k\to G(\xb)$ and that the $m-r$ smallest eigenvalues of $M^k$ are simple, if $x^k\neq \bar{x}$, meaning their associated eigenvectors are unique up to sign, when $k$ is large enough. Consequently, $v_{ii}(x^k,E^k)$ is invariant to the choice of $E^k\in \cb{M^k}$, for all such $k$, and every $i\in \{1,\ldots,m-r\}$. The rest of this proof follows the exact same lines as the proof of Theorem~\ref{sdp:mincrcqkkt}.
\end{proof}
\begin{remark}\label{sdp:strictndgrob}
The ``perturbed versions'' of weak-nondegeneracy and 
weak-Robin\-son's CQ, 
in the sense of Definition~\ref{sdp:scrcq}, are nondegeneracy and Robinson's CQ, respectively. In other words, nondegeneracy (respectively, Robinson's CQ) holds at $\xb\in \F$ if, and only if, for every sequence $\seq{x}\to \xb$ and every $\seq{\Delta}\subseteq \sym$ such that $\Delta^k\to 0$\if{ and $G(x^k)+\Delta^k\succeq 0$ for every $k$}\fi, there is some $\bar{E}\in \limsup_{k\in \N} \cb{G(x^k)+\Delta^k}$ such that $\{v_{ii}(\xb,\bar{E})\colon i\in \{1,\ldots,m-r\}\}$ is (positively) linearly independent, where $r=\rank (G(\xb))$. For more details, see~\cite[Rem. 3.1]{weaksparsecq}.
\end{remark}

\section{Relationship with metric subregularity CQ}\label{sec:msr}

Besides convergence of algortihms, the CQs we present also have implications towards stability and error analysis. We make this link by means of establishing a relationship between \sdpscpld{} (and \sdpscrcq{}) and the so-called \textit{metric subregularity CQ} (also known as the \emph{error bound CQ} in NLP), defined in our SDP framework as follows:

\begin{definition}[e.g., Def. 1.1 of~\cite{gfrerermsr}]\label{def:msr}
We say that a feasible point $\xb$ of~\eqref{NSDP} satisfies the \emph{metric subregularity CQ} when there exists some $\gamma>0$ and a neighborhood $\mathcal{V}$ of $\xb$ such that
\[
	\textnormal{dist}(x,\F)\leq \gamma \|\Pi_{\K}(-G(x))\|
\]
for every $x\in \mathcal{V}$. That is, when the set-valued mapping $\mathcal{G}\colon\R^n\rightrightarrows \Y$ that maps $x\mapsto G(x)-\K$ is metric subregular at $(\xb,0)\in \textnormal{graph}(\mathcal{G})$. Here $\textnormal{dist}(x,\F)$ denotes the distance between $x$ and $\F$, and $\textnormal{graph}(\mathcal{G})\subseteq \R\times \Y$ is the graph of $\mathcal{G}$.
\end{definition}

The metric subregularity CQ is implied by Robinson's CQ, which in turn coincides with a similar condition called~\emph{metric regularity CQ}, and it has relevant implications on the stability analysis of optimization problems -- for details, we refer to Ioffe's survey~\cite{ioffe1,ioffe2}. Besides, there are several works addressing the relationship between constant rank constraint qualifications and the metric subregularity CQ in NLP, such as Minchenko and Stakhovski~\cite{rcr}, Andreani et al.~\cite{ahss12}, and others.

We will use a sufficient condition for metric subregularity CQ to hold, originally proposed by Minchenko and Stakhovski~\cite[Thm. 2]{rcr} for NLP problems. We made a simple extension of it to NSDP, which seems not to have been done before in the literature. It is worth mentioning, nevertheless, that the proof we present is essentially the same as the original one, with some minor adaptations to the NSDP context via Moreau's decomposition. 

\begin{proposition}\label{stab:prop1-text}
Let $\xb\in \F$ and assume that $G$ is twice differentiable around $\xb$. For every given $x\in \R^n$, let $\Lambda_{\Pi}(x)$ denote the set of Lagrange multipliers of the problem of minimizing $\|z-x\|$ subject to $G(z)\succeq 0$, $z\in \R^n$. If there exist numbers $\tau>0$ and $\delta>0$ such that $\Lambda_{\Pi}(x)\cap \bar{B}(0,\tau)
\neq \emptyset$ for every $x\in B(\xb,\delta)$, then $\xb$ satisfies metric subregularity CQ. 
\end{proposition}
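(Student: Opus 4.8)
\emph{Proof plan.} The plan is to argue by contradiction, in the spirit of the NLP proof of Minchenko and Stakhovski~\cite[Thm.~2]{rcr}, with Moreau's decomposition of $G(x)$ playing the role that the sign analysis of active inequality multipliers plays there. Suppose metric subregularity fails at $\xb$: then for each $k\in\N$ there is $x^k\in B(\xb,1/k)$ with $\textnormal{dist}(x^k,\F)>k\,\|\Pi_{\K}(-G(x^k))\|$. This forces $x^k\notin\F$ (otherwise both sides vanish), whence $G(x^k)\not\succeq0$ and $\Pi_{\K}(-G(x^k))\neq 0$; also $x^k\to\xb$. Since $\F$ is closed and nonempty, $\textnormal{dist}(x^k,\F)$ is attained. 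For $k$ large, $x^k\in B(\xb,\delta)$, so the hypothesis furnishes $Y^k\in\Lambda_{\Pi}(x^k)\cap\bar{B}(0,\tau)$ together with a minimizer $z^k$ of $\min\{\|z-x^k\|:G(z)\succeq0\}$ at which $Y^k$ is a Lagrange multiplier; thus $\|z^k-x^k\|=\textnormal{dist}(x^k,\F)>0$, and since $\xb\in\F$ we have $\|z^k-x^k\|\le\|x^k-\xb\|\to0$, so $z^k\to\xb$. As $z^k\neq x^k$, the objective $\|\cdot-x^k\|$ is differentiable at $z^k$, and the first-order optimality conditions read $Y^k\succeq0$, $\langle G(z^k),Y^k\rangle=0$, and
\[
	\frac{z^k-x^k}{\|z^k-x^k\|}=DG(z^k)^*[Y^k].
\]

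Next I would pair this identity with $x^k-z^k$: the left side yields $-\|z^k-x^k\|$, while the right side equals $\langle Y^k,\,DG(z^k)[x^k-z^k]\rangle$. A Taylor expansion of $G$ at $z^k$, legitimate since $G$ is twice differentiable near $\xb$ and $x^k,z^k\to\xb$, gives $DG(z^k)[x^k-z^k]=G(x^k)-G(z^k)-R^k$ with $\|R^k\|=O(\|x^k-z^k\|^2)$. Substituting and using complementarity $\langle G(z^k),Y^k\rangle=0$ produces the scalar identity
\[
	\|z^k-x^k\|=-\langle G(x^k),Y^k\rangle+\langle R^k,Y^k\rangle.
\]

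It then remains to estimate the two terms on the right. For the first, use Moreau's decomposition $G(x^k)=\Pi_{\K}(G(x^k))-\Pi_{\K}(-G(x^k))$~\cite[Prop.~1]{Moreau}: since $\K$ is self-dual and $Y^k,\Pi_{\K}(G(x^k))\succeq0$, we get $\langle\Pi_{\K}(G(x^k)),Y^k\rangle\ge0$, so by Cauchy--Schwarz and $\|Y^k\|\le\tau$,
\[
	-\langle G(x^k),Y^k\rangle\le\langle\Pi_{\K}(-G(x^k)),Y^k\rangle\le\tau\,\|\Pi_{\K}(-G(x^k))\|.
\]
For the second, $|\langle R^k,Y^k\rangle|\le\tau\|R^k\|=O(\|x^k-z^k\|^2)=o(\textnormal{dist}(x^k,\F))$, hence for $k$ large it is at most $\tfrac12\textnormal{dist}(x^k,\F)$. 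Absorbing it into the left-hand side yields $\textnormal{dist}(x^k,\F)\le2\tau\,\|\Pi_{\K}(-G(x^k))\|$ for all large $k$, which contradicts $\textnormal{dist}(x^k,\F)>k\,\|\Pi_{\K}(-G(x^k))\|$ as soon as $k>2\tau$ (recall $\|\Pi_{\K}(-G(x^k))\|>0$). Therefore metric subregularity holds at $\xb$, with constant $\gamma=2\tau$ on a suitable neighborhood.

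Once set up, the argument is routine; the points that need care are: (i) verifying $x^k\notin\F$, so that the projection problem's objective is smooth at $z^k$ and $\Pi_{\K}(-G(x^k))$ is genuinely nonzero; (ii) the uniform $O(\|x^k-z^k\|^2)$ control of the Taylor remainder $R^k$ as both $x^k$ and $z^k$ approach $\xb$ — this is precisely where twice differentiability around $\xb$ enters; and (iii) the self-duality step, which is the only place the conic structure intervenes and is the NSDP substitute for the active-set sign analysis in the NLP argument. I expect the bookkeeping in (ii) to be the main, though standard, obstacle.
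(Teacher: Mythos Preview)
Your argument is correct. Both proofs hinge on the same two ingredients: pairing the stationarity condition of the projection problem with the vector $x-\Pi_{\F}(x)$, and then controlling $-\langle G(x),Y\rangle$ via Moreau's decomposition and self-duality. The difference lies in how the linearization error is handled. The paper works directly: it introduces the auxiliary function $h(z,Y)=\frac{\langle z-x,\,z-\Pi_{\F}(x)\rangle}{\|x-\Pi_{\F}(x)\|}-\langle G(z),Y\rangle$, chooses a radius $\delta_0$ small enough that $\nabla_z^2 h\succeq 0$ uniformly for $z\in B(\xb,\delta)$ and $\|Y\|\leq 2\tau$, and then uses convexity of $h(\cdot,2Y)$ to obtain $-\langle DG(\Pi_\F(x))^*[2Y],\,x-\Pi_\F(x)\rangle-\|x-\Pi_\F(x)\|\leq -\langle G(x),2Y\rangle$ for \emph{every} $x$ in a fixed ball, yielding the bound $\dist(x,\F)\leq 2\tau\,\|\Pi_{\K}(-G(x))\|$ on that ball without any asymptotic step. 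You instead argue by contradiction, expand $G$ by Taylor, and absorb the second-order remainder $\langle R^k,Y^k\rangle=O(\|x^k-z^k\|^2)=o(\dist(x^k,\F))$ into the left-hand side for $k$ large. Your route is arguably more elementary (no auxiliary problem, no convexity), while the paper's route is constructive and delivers an explicit neighborhood rather than an eventual one; both consume the twice-differentiability hypothesis in the same way, namely through a uniform local bound on $D^2G$ near $\xb$ (your point (ii), their choice of $\delta_0$).
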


\begin{proof}
Let $\tau$ and $\delta$ be as described in the hypothesis. Following the proof of~\cite[Thm. 2]{rcr}, note that if $\xb\in \int\F$, then it trivially satisfies metric subregularity CQ, so we will assume that $\xb\in \bd \F$. Let $\delta_0\in (0,\delta)$ be such that
\[
	\frac{4}{\delta_0}\I_n-D^2 G(z)^*[Y]\succeq 0
\]
for all $z\in B(\xb,\delta)$ and all $Y\in \textnormal{cl}(B(0,2\tau))$. Let $x\in B(\xb,\delta_0/2)$ be such that $x\not\in\F$. Although $\Pi_{\F}(x)$ may not be well-defined as a function of $x$, we will use the notation $\Pi_{\F}(x)$ to denote an arbitrary minimizer of $\|z-x\|$ subject to $G(z)\succeq 0$. Then, by definition, we have that $\|\Pi_{\F}(x)-x\|\leq \|\bar{x}-x\|<\delta_0/2$, so $\Pi_{\F}(x)\in B(x,\delta_0/2)$ and, therefore, $\|\Pi_{\F}(x)-\xb\|\leq \|\Pi_{\F}(x)-x\|+\|x-\xb\|< \delta_0$. Let $h\colon \R^n\times \S^m\to \R$ be defined as
\[
	h(z,Y)\doteq \frac{\langle z-x, z-\Pi_{\F}(x) \rangle}{\|x-\Pi_{\F}(x)\|} - \langle G(z), Y\rangle
\]
and note that
\[
	\nabla^2_z h(z,Y)=\frac{2}{\|x-\Pi_{\F}(x)\|}\I_n-D^2 G(z)^*[Y] \ \succeq \ \frac{4}{\delta_0}\I_n- D^2 G(z)^*[Y] \ \succeq \ 0
\]
whenever $z\in B(\xb,\delta)$ and $Y\in \textnormal{cl}(B(0,2\tau))$. Thus, $h(z,Y)$ is convex with respect to its first variable $z\in B(\xb,\delta)$, for every $Y\in \textnormal{cl}(B(0,2\tau))$. Now let us fix an arbitrary $Y\in \Lambda_{\Pi}(x)\cap \textnormal{cl}(B(0,\tau))$, which is nonempty by hypothesis. Recall that, by definition of the set $\Lambda_{\Pi}(x)$, we have that $Y$ is a Lagrange multiplier of the projection problem associated with the point $\Pi_{\F}(x)$. Hence, $2Y$ is a Lagrange multiplier of the problem:
\begin{equation}\label{sdp:auxprobstab}
	\textnormal{Minimize} \ \tilde{f}_x(z) \doteq \|z-x\| + \frac{\langle z-x, z-\Pi_{\F}(x) \rangle}{\|x-\Pi_{\F}(x)\|}, \quad \textnormal{subject to} \ G(z)\succeq 0
\end{equation}
associated with the point $\Pi_{\F}(x)$, which is a local minimizer of $\tilde{f}_x$ since it is elementary to check that $\tilde{f}_x(\Pi_{\F}(x))\geq \|z-x\|$ for every $z\in \F$, by the definition of projection (for details, see~\cite[Lem. 3]{rcr}), with equality at $\Pi_{\F}(x)$. Writing the KKT conditions for the problem~\eqref{sdp:auxprobstab} at $\Pi_{\F}(x)$ with the Lagrange multiplier $2Y\in \textnormal{cl}(B(0,2\tau))$, we obtain
\begin{equation}\label{sdp:auxstatstab}
	\frac{2(\Pi_{\F}(x)-x)}{\|x-\Pi_{\F}(x)\|} - DG(\Pi_{\F}(x))^*[2Y]=0
\end{equation}
with $\langle G(\Pi_{\F}(x)),2Y\rangle=0$, which yields
\begin{equation}\label{sdp:auxineqstab}
	\begin{array}{ll}
	\|x-\Pi_{\F}(x)\| & = -\|x-\Pi_{\F}(x)\| - \langle DG(\Pi_{\F}(x))^*[2Y], x-\Pi_{\F}(x)\rangle\\
	& \leq \langle  G(\Pi_{\F}(x))-G(x), 2Y\rangle\\
	& = -\langle G(x), 2Y\rangle
	\end{array}
\end{equation}
after taking inner products of both sides of~\eqref{sdp:auxstatstab} with $x-\Pi_{\F}(x)$. The middle inequality follows from the definition of adjoint and the convexity of $h(z,Y)$ in the first variable. 
Taking Moreau's decomposition for $G(x)$, we obtain from~\eqref{sdp:auxineqstab} that
\[
	\|x-\Pi_{\F}(x)\| \leq -\langle \Pi_{\K}(G(x)), 2Y \rangle+\langle \Pi_{\K}(-G(x)), 2Y\rangle\leq \langle \Pi_{\K}(-G(x)), 2Y\rangle,
\]
because $Y\succeq 0$, which is self-dual, so $\langle \Pi_{\K}(G(x)), 2Y\rangle\geq 0$; then
\[
	\textnormal{dist}(x,\F) = \|x-\Pi_{\F}(x)\| \leq \|2Y\| \|\Pi_{\K}(-G(x))\|\leq 2\tau \|\Pi_{\K}(-G(x))\|.
\]
Since $x$ was chosen arbitrarily, set $\gamma\doteq 2\tau$ and we are done.
\end{proof}

Now, to compare metric subregularity CQ with \sdpscrcq{} and \sdpscpld{}, we first need to show that they are robust, in the sense they are preserved in a neighborhood of the point of interest. This property may not be clear from Definition~\ref{sdp:scrcq}, but it becomes clear after we exhibit a characterization of it without sequences, as follows:

\begin{proposition}\label{sdp:nonseq}
Let $\xb\in \F$ and let $r$ be the rank of $G(\xb)$.
\begin{itemize}
\item \sdpscrcq{} holds at $\xb$ if, and only if, $r=m$ or, for every $\bar{E}\in \cb{G(\xb)}$, there exists some neighborhood $\mathcal{V}$ of $(\xb,\bar{E})$ such that for all $J\subseteq \{1,\ldots,m-r\}$, we have that if the family $\{v_{ii}(\xb,\bar{E})\}_{i\in J}$ is linearly dependent, then $\{v_{ii}(x,E)\}_{i\in J}$ remains linearly dependent for every $(x,E)\in \mathcal{V}$; 
\item \sdpscpld{} holds at $\xb$ if, and only if, $r=m$ or, for every $\bar{E}\in \cb{G(\xb)}$, there exists some neighborhood $\mathcal{V}$ of $(\xb,\bar{E})$ such that for all $J\subseteq \{1,\ldots,m-r\}$, we have that if the family $\{v_{ii}(\xb,\bar{E})\}_{i\in J}$ is positively linearly dependent, then $\{v_{ii}(x,E)\}_{i\in J}$ remains linearly dependent for every $(x,E)\in \mathcal{V}$.
\end{itemize}
\end{proposition}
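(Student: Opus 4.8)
\emph{Strategy.} The plan is to prove both displayed equivalences by the same argument, writing out the \sdpscrcq{} case and indicating at the end the (verbatim) change for \sdpscpld{}. Throughout, assume $r<m$, since otherwise both sides hold trivially, and recall that $\lambda_r(G(\xb))>0=\lambda_{r+1}(G(\xb))$ because $r=\rank(G(\xb))$. Each equivalence splits into the implication ``neighborhood property $\Rightarrow$ Definition~\ref{sdp:scrcq}'' and its contrapositive.

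\emph{From the neighborhood property to \sdpscrcq{}.} Let $\seq{x}\to\xb$ and $\seq{\Delta}\subseteq\sym$ with $\Delta^k\to 0$. Since $G(x^k)+\Delta^k\to G(\xb)$, continuity of eigenvalues gives $\lambda_r(G(x^k)+\Delta^k)>\lambda_{r+1}(G(x^k)+\Delta^k)$ for all large $k$, so $\cb{G(x^k)+\Delta^k}\neq\emptyset$ and we may pick $E^k$ in it. The matrices $E^k$ have orthonormal columns, so $\seq{E}$ is bounded and we extract $\{E^k\}_{k\in I}\to\bar E$ with $I\subseteq_{\infty}\N$. Passing to the limit along $I$ in the identities $(G(x^k)+\Delta^k)E^k=E^k\Diag(\lambda_{r+1}(G(x^k)+\Delta^k),\ldots,\lambda_m(G(x^k)+\Delta^k))$ and $(E^k)^\T E^k=\I_{m-r}$, and using $\lambda_{r+1}(G(\xb))=\cdots=\lambda_m(G(\xb))=0$, yields $G(\xb)\bar E=0$ and $\bar E^\T\bar E=\I_{m-r}$, i.e.\ $\bar E\in\cb{G(\xb)}$. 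Applying the neighborhood property at this $\bar E$ gives a neighborhood $\mathcal V$ of $(\xb,\bar E)$ with the stated property; for $k\in I$ large enough $(x^k,E^k)\in\mathcal V$, hence for every $J\subseteq\{1,\ldots,m-r\}$ with $\{v_{ii}(\xb,\bar E)\}_{i\in J}$ linearly dependent the family $\{v_{ii}(x^k,E^k)\}_{i\in J}$ is linearly dependent for all such $k$. This is precisely the requirement of Definition~\ref{sdp:scrcq}.

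\emph{From \sdpscrcq{} to the neighborhood property.} Argue by contraposition: suppose some $\bar E\in\cb{G(\xb)}$ admits no neighborhood with the stated property. Shrinking neighborhoods of $(\xb,\bar E)$ produces, for each $k$, a set $J_k\subseteq\{1,\ldots,m-r\}$ and a pair $(x^k,E^k)$ with $x^k\to\xb$, $E^k\to\bar E$ (the $E^k$ taken with orthonormal columns, this being the operative case in view of the construction below), such that $\{v_{ii}(\xb,\bar E)\}_{i\in J_k}$ is linearly dependent while $\{v_{ii}(x^k,E^k)\}_{i\in J_k}$ is linearly independent. As there are finitely many subsets of $\{1,\ldots,m-r\}$, the pigeonhole principle lets us pass to a subsequence along which $J_k\equiv J$ is fixed. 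Now neutralize the perturbation freedom of Definition~\ref{sdp:scrcq} as in the proof of Theorem~\ref{sdp:scrcqcq}: complete $E^k$ to an orthogonal $U^k=[P^k\mid E^k]$ with $P^k\in\R^{m\times r}$ and, after a further subsequence, $P^k\to P$, so the columns of $P$ form an orthonormal basis of $\Im G(\xb)$ and $G(\xb)=P(P^\T G(\xb)P)P^\T$; pick $d_1^k>\cdots>d_{m-r}^k>0$ with $d_1^k\to 0$ (e.g.\ $d_i^k\doteq(m-r-i+1)\|x^k-\xb\|$, replaced by $(m-r-i+1)/k$ when this vanishes) and set
\[
	M^k\doteq P^k(P^k)^\T G(\xb)P^k(P^k)^\T+E^k\Diag(d_1^k,\ldots,d_{m-r}^k)(E^k)^\T,\qquad \Delta^k\doteq M^k-G(x^k).
\]
Then $M^k\to G(\xb)$, hence $\Delta^k\to 0$; since $(P^k)^\T E^k=0$, the $r$ largest eigenvalues of $M^k$ are those of $(P^k)^\T G(\xb)P^k$ (which for $k$ large exceed $d_1^k$) and the $m-r$ smallest ones are $d_1^k>\cdots>d_{m-r}^k$, simple, with eigenvectors the columns of $E^k$ in order. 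Thus $\cb{M^k}$ consists exactly of the matrices $E^kS$ with $S$ diagonal and $\pm1$-valued, and, $v_{ii}(x,\cdot)$ being invariant under column sign changes, every admissible choice in Definition~\ref{sdp:scrcq} for $\seq{x},\seq{\Delta}$ yields frames $E^kS$ with $v_{ii}(x^k,E^kS)=v_{ii}(x^k,E^k)$ and limit $\bar ES$ with $v_{ii}(\xb,\bar ES)=v_{ii}(\xb,\bar E)$. Since $\{v_{ii}(\xb,\bar E)\}_{i\in J}$ is linearly dependent, \sdpscrcq{} forces $\{v_{ii}(x^k,E^k)\}_{i\in J}$ to be linearly dependent for all large $k$ in this subsequence, contradicting the construction.

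\emph{Main obstacle and the \sdpscpld{} case.} The crux is the second implication, namely producing $\Delta^k\to 0$ for which the \emph{prescribed} frame $E^k$ becomes the unique (up to column signs) member of $\cb{G(x^k)+\Delta^k}$; this is achieved by splitting $M^k$ into a part supported on $\Im P^k$ carrying the large eigenvalues and a part $E^k\Diag(d_i^k)(E^k)^\T$ carrying $m-r$ distinct, vanishing eigenvalues, mirroring the device already used for Theorem~\ref{sdp:scrcqcq}; the rest (eigenvalue continuity, the pigeonhole step, sign-invariance of the $v_{ii}$) is routine. Finally, the \sdpscpld{} characterization is obtained by repeating the argument verbatim with ``positively linearly dependent'' replacing ``linearly dependent'' in the hypotheses on the limit family $\{v_{ii}(\xb,\bar E)\}_{i\in J}$; the conclusion (linear dependence of the perturbed family) and every intermediate step are unchanged, since positivity is never used.
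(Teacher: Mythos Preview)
Your proof is correct and follows essentially the same route as the paper: both directions proceed exactly as in the paper's argument, with the key step being the construction of a perturbation $\Delta^k=M^k-G(x^k)$ that forces the prescribed frame $E^k$ to be the unique (up to signs) element of $\cb{G(x^k)+\Delta^k}$, mirroring the device from Theorem~\ref{sdp:scrcqcq}. Your variant of $M^k$---taking $P^k$ to be an orthonormal complement of $E^k$ and using $P^k(P^k)^\T G(\xb)P^k(P^k)^\T$ for the ``large'' block---is slightly more careful than the paper's, since it guarantees $U^k=[P^k\mid E^k]$ is orthogonal, whereas the paper's choice of $P^k$ as eigenvectors of $G(x^k)$ does not obviously ensure $(P^k)^\T E^k=0$ when $E^k$ is an arbitrary sequence.
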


\begin{proof}
We will prove only item 1, since item 2 follows analogously. Let $\xb$ satisfy \sdpscrcq{}; by contradiction: suppose that there exists some $\bar{E}\in \cb{G(\xb)}$, some $J\subseteq \{1,\ldots,m-r\}$, and some sequence $\{(x^k,E^k)\}_{k\in \N}\to (\xb,\bar{E})$ such that $\{v_{ii}(\xb,\bar{E})\}_{i\in J}$ is linearly dependent, but $\{v_{ii}(x^k,E^k)\}_{i\in J}$ is linearly independent for every large $k\in \N$. Let $P^k\in \R^{m\times r}$ be a matrix whose columns are orthogonal eigenvectors associated with the $r$ largest eigenvalues of $G(x^k)$, define $U^k\doteq [P^k,E^k]$, and consider $M^k$ as in~\eqref{sdp:mk}. Set $\Delta^k\doteq M^k-G(x^k)$ and note that $v_{ii}(x^k,E^k)$ is invariant to $E^k\in \cb{\Delta^k+G(x^k)}$ when $k$ is large, provided that $x^k\neq \bar{x}$.  This contradicts \sdpscrcq{}.

Conversely, let $\seq{x}\to \bar{x}$ and $\Delta^k\to 0$ be 
any sequences, and let $J\subseteq \{1,\ldots,m-r\}$ be any subset. For each $k$, pick any $E^k\in \cb{G(x^k)+\Delta^k}$ and consider the sequence $\seq{E}$, which is bounded. Let $I\subseteq_{\infty} \N$ and $\bar{E}$ be arbitrary, as long as $\{E^k\}_{k\in I}\to \bar{E}$. Then, by hypothesis, there exists a neighborhood $\mathcal{V}$ of $(\xb,\bar{E})$ such that if $\{v_{ii}(\xb,\bar{E})\}_{i\in J}$ is linearly dependent, then $\{v_{ii}(x^k,E^k)\}_{i\in J}$ is also linearly dependent for all large enough $k\in I$, since $(x^k,E^k)\in \mathcal{V}$ for all such $k$. 
\end{proof}

In light of the equivalence of Proposition~\ref{sdp:nonseq}, we obtain the robustness property.

\begin{proposition}\label{prop:robust}
If \sdpscpld{} holds at $\xb$, then there exists a neighborhood $\mathcal{V}$ of $\xb$ such that \sdpscpld{} also holds for every $x\in \mathcal{V}\cap \F$. Moreover, the same property holds for \sdpscrcq{}.
\end{proposition}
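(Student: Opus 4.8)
The plan is to derive robustness straight from the sequence-free characterization of Proposition~\ref{sdp:nonseq}, arguing by contradiction; I will detail \sdpscpld{}, the \sdpscrcq{} case being identical up to replacing ``positively linearly dependent'' by ``linearly dependent''. Suppose \sdpscpld{} holds at $\xb$ but fails at a sequence of feasible points $x^j\to\xb$. Since $\lambda_r(G(x^j))\to\lambda_r(G(\xb))>0$ while $\lambda_{r+1}(G(x^j))\to 0$, for $j$ large the rank $\hat r_j\doteq\rank G(x^j)$ lies in $\{r,\ldots,m\}$; discarding the (trivial) indices with $\hat r_j=m$ and using the pigeonhole principle, I may assume $\hat r_j\equiv\hat r$ with $r\le\hat r<m$. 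Applying Proposition~\ref{sdp:nonseq} at $x^j$ (whose rank is $\hat r$, so the relevant matrices are those in $\mathcal E_{\hat r}(G(x^j))$ and the relevant index sets are subsets of $\{1,\ldots,m-\hat r\}$), the failure produces some $E^j\in\mathcal E_{\hat r}(G(x^j))$, some $\hat J\subseteq\{1,\ldots,m-\hat r\}$ (fixed along a subsequence, again by pigeonhole), and a pair $(y^j,Q^j)$ with $\|y^j-x^j\|+\|Q^j-E^j\|\to 0$, such that $\{v_{ii}(x^j,E^j)\}_{i\in\hat J}$ is positively linearly dependent while $\{v_{ii}(y^j,Q^j)\}_{i\in\hat J}$ is linearly independent; note $y^j\to\xb$.

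The crux is to ``lift'' these objects from base rank $\hat r$ to base rank $r$. I extend $E^j$ to $\hat E^j\doteq[W^j,E^j]\in\mathcal E_r(G(x^j))$, whose first $\hat r-r$ columns $W^j$ are orthonormal eigenvectors of $G(x^j)$ associated with $\lambda_{r+1}(G(x^j)),\ldots,\lambda_{\hat r}(G(x^j))$ and orthogonal to the range of $E^j$; this is well defined for $j$ large because the gap $\lambda_r(G(x^j))>\lambda_{r+1}(G(x^j))$ persists. All of $\lambda_{r+1}(G(x^j)),\ldots,\lambda_m(G(x^j))$ tend to $0$, and since that persistent gap makes the spectral subspace spanned by the $m-r$ smallest eigenvalues of $G(x^j)$ vary continuously, any cluster point of $\{\hat E^j\}$ is an orthonormal basis of $\Ker G(\xb)$, hence an element of $\cb{G(\xb)}$. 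Passing to a subsequence, put $\bar E\doteq\lim_j\hat E^j\in\cb{G(\xb)}$ and $\hat Q^j\doteq[W^j,Q^j]$, which converges to the same $\bar E$ because $Q^j$ and $E^j$ share their limit. After the harmless relabelling $J\doteq\{\,i+\hat r-r:\ i\in\hat J\,\}\subseteq\{1,\ldots,m-r\}$, the columns of $E^j$ (resp.\ $Q^j$) indexed by $\hat J$ are exactly the columns of $\hat E^j$ (resp.\ $\hat Q^j$) indexed by $J$, so $\{v_{ll}(x^j,\hat E^j)\}_{l\in J}=\{v_{ii}(x^j,E^j)\}_{i\in\hat J}$ and $\{v_{ll}(y^j,\hat Q^j)\}_{l\in J}=\{v_{ii}(y^j,Q^j)\}_{i\in\hat J}$.

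To finish, recall $v_{ll}$ is jointly continuous in its two arguments and that positive linear dependence is stable under limits: normalizing the nonnegative coefficient vectors witnessing positive linear dependence of $\{v_{ll}(x^j,\hat E^j)\}_{l\in J}$ and passing to a subsequence yields that $\{v_{ll}(\xb,\bar E)\}_{l\in J}$ is positively linearly dependent. Applying Proposition~\ref{sdp:nonseq} at $\xb$ with this $\bar E\in\cb{G(\xb)}$, there is a neighborhood $\mathcal V$ of $(\xb,\bar E)$ on which $\{v_{ll}(x,E)\}_{l\in J}$ remains linearly dependent; since $(y^j,\hat Q^j)\to(\xb,\bar E)$, we have $(y^j,\hat Q^j)\in\mathcal V$ for $j$ large, so $\{v_{ii}(y^j,Q^j)\}_{i\in\hat J}$ is linearly dependent — contradicting its linear independence. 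This gives robustness of \sdpscpld{}; for \sdpscrcq{} one runs the same argument, only dropping the sign constraint on the witnessing coefficients in the limiting step. The main obstacle is exactly the possible jump $r\mapsto\hat r\ge r$ of the rank of $G$ at nearby feasible points, which is what forces the lifting to base rank $r$ together with the continuity of the bottom spectral subspace; everything else reduces to compactness and routine bookkeeping.
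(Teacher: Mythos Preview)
Your argument is correct and proceeds along the same line the paper intends, namely via the sequence-free characterization of Proposition~\ref{sdp:nonseq}; the paper's own proof is the single sentence ``Direct from Proposition~\ref{sdp:nonseq}''. What you add is a careful treatment of the one genuine subtlety that sentence hides: a nearby feasible point $x^j$ may have $\hat r\doteq\rank G(x^j)>r$, so the witnesses $E^j\in\mathcal E_{\hat r}(G(x^j))$ live in the wrong matrix space, and your lifting $\hat E^j=[W^j,E^j]\in\mathcal E_r(G(x^j))$ together with the closure of positive linear dependence under limits is exactly what is needed to pull the contradiction back to $\bar x$.
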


\begin{proof}
%
%
Direct from Proposition~\ref{sdp:nonseq}.
\end{proof}

Now, using Proposition~\ref{prop:robust}, 
it is possible to prove that \sdpscpld{} (and \sdpscrcq{}) implies metric subregularity CQ. We shall do this in the same style as Andreani et al.~\cite{ahss12}:

\begin{theorem}\label{sdp:scpldmetricsub}
If \sdpscpld{} holds at $\xb$ and $G$ is twice differentiable around $\xb$, then $\xb$ satisfies metric subregularity CQ.
\end{theorem}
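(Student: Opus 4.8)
The plan is to invoke Proposition~\ref{stab:prop1-text}: since $G$ is twice differentiable around $\xb$, it suffices to produce $\tau>0$ and $\delta>0$ such that $\Lambda_{\Pi}(x)\cap\bar{B}(0,\tau)\ne\emptyset$ for every $x\in B(\xb,\delta)$. Let $r\doteq\rank(G(\xb))$; if $r=m$ then $G(\xb)\in\int\K$, hence $\xb\in\int\F$ and metric subregularity holds trivially, so assume $r<m$. Suppose, for contradiction, that no such $\tau,\delta$ exist; then there is $\seq{x}\to\xb$ with $\Lambda_{\Pi}(x^k)\cap\bar{B}(0,k)=\emptyset$ for every $k$, and since the zero matrix belongs to $\Lambda_{\Pi}(x)$ whenever $x\in\F$, necessarily $x^k\notin\F$ for every $k$. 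Let $z^k$ be a minimizer of $z\mapsto\norm{z-x^k}$ subject to $G(z)\succeq 0$; then $\norm{z^k-x^k}\le\norm{\xb-x^k}\to 0$, so $z^k\to\xb$, and by Proposition~\ref{prop:robust} the point $z^k$ satisfies \sdpscpld{}, hence \sdpwcpld{} --- a constraint qualification by Theorem~\ref{sdp:mincrcqkkt} --- for all large $k$. Since $x^k\notin\F$, the objective is differentiable at $z^k$, so $z^k$ is a KKT point of the projection problem: there is $Y^k\in\Lambda_{\Pi}(x^k)$ with $Y^k\succeq 0$, $\langle G(z^k),Y^k\rangle=0$, and
\[
	d^k\doteq\frac{z^k-x^k}{\norm{z^k-x^k}}=DG(z^k)^*[Y^k],\qquad \norm{d^k}=1.
\]
In particular $\Lambda_{\Pi}(x^k)\ne\emptyset$; passing to a subsequence, $d^k\to\bar d$. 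The multiplier $Y^k$ need not remain bounded, which is exactly what the next steps overcome.

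From $Y^kG(z^k)=0$ and positive semidefiniteness, $Y^k$ and $G(z^k)$ commute, and every eigenvector of $Y^k$ with positive eigenvalue is a null vector of $G(z^k)$. For $k$ large the $r$ largest eigenvalues of $G(z^k)$ stay bounded away from $0$ while the other $m-r$ tend to $0$, so we may pick orthonormal $e_1^k,\dots,e_{m-r}^k$ that are common eigenvectors of $G(z^k)$ (for its $m-r$ smallest eigenvalues) and of $Y^k$, together with $P^k\in\R^{m\times r}$ whose columns are eigenvectors of $G(z^k)$ for its $r$ largest eigenvalues; then $Y^k=\sum_{i=1}^{m-r}\mu_i^k e_i^k(e_i^k)^\T$ with $\mu_i^k\ge 0$, and $(e_i^k)^\T G(z^k)e_i^k=0$ whenever $\mu_i^k>0$. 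To make $E^k\doteq[e_1^k,\dots,e_{m-r}^k]$ essentially the unique member of a set of the form $\cb{G(z^k)+\Delta^k}$, I would reuse the construction from the proof of Theorem~\ref{sdp:scrcqcq}: with $U^k\doteq[P^k,E^k]$, let $M^k$ be given by~\eqref{sdp:mk} with $z^k$ in place of $x^k$, set $\Delta^k\doteq M^k-G(z^k)\to 0$, and note that the $m-r$ smallest eigenvalues of $M^k=G(z^k)+\Delta^k$ are simple whenever $z^k\ne\xb$, so $v_{ii}(z^k,E^k)$ is invariant to the choice of $E^k\in\cb{G(z^k)+\Delta^k}$, exactly as in that proof.

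Applying \sdpscpld{} at $\xb$ to the sequences $z^k\to\xb$ and $\Delta^k\to 0$ yields, after passing to a subsequence $I$, a matrix $\bar E\doteq\lim_{k\in I}E^k$ with $E^k\in\cb{G(z^k)+\Delta^k}$ such that, for every $J\subseteq\{1,\dots,m-r\}$, positive linear dependence of $\{v_{ii}(\xb,\bar E)\}_{i\in J}$ forces linear dependence of $\{v_{ii}(z^k,E^k)\}_{i\in J}$ for all large $k\in I$. Since $DG(z^k)^*[e_i^k(e_i^k)^\T]=v_{ii}(z^k,E^k)$ (as in the proof of Theorem~\ref{sdp:mincrcqkkt}), stationarity reads $d^k=\sum_{i=1}^{m-r}\mu_i^k v_{ii}(z^k,E^k)$ with $\mu_i^k\ge 0$. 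Carathéodory's Lemma~\ref{lem:carath} together with the infinite pigeonhole principle then produce a fixed $J\subseteq\{1,\dots,m-r\}$ with $\{v_{ii}(z^k,E^k)\}_{i\in J}$ linearly independent for all $k\in I$, and scalars $\tilde\mu_i^k>0$ ($i\in J$) with $d^k=\sum_{i\in J}\tilde\mu_i^k v_{ii}(z^k,E^k)$ and $\mu_i^k>0$ --- hence $(e_i^k)^\T G(z^k)e_i^k=0$ --- for $i\in J$. I claim $\sup\{\tilde\mu_i^k:i\in J,\ k\in I\}<\infty$; otherwise, dividing the last identity by $\max_{i\in J}\tilde\mu_i^k\to\infty$ along a suitable subsequence and letting $k\to\infty$ --- the left-hand side vanishing because $\norm{d^k}=1$ --- produces a nontrivial nonnegative combination of $\{v_{ii}(\xb,\bar E)\}_{i\in J}$ equal to zero, so that family is positively linearly dependent, and \sdpscpld{} then forces $\{v_{ii}(z^k,E^k)\}_{i\in J}$ to be linearly dependent for large $k\in I$ --- contradicting its linear independence. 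Hence, with $\tau\doteq\sup\{\tilde\mu_i^k:i\in J,\ k\in I\}<\infty$, the matrix $\widehat Y^k\doteq\sum_{i\in J}\tilde\mu_i^k e_i^k(e_i^k)^\T\succeq 0$ satisfies $DG(z^k)^*[\widehat Y^k]=d^k$ and $\langle G(z^k),\widehat Y^k\rangle=\sum_{i\in J}\tilde\mu_i^k(e_i^k)^\T G(z^k)e_i^k=0$, so $\widehat Y^k\in\Lambda_{\Pi}(x^k)$, while $\norm{\widehat Y^k}\le\sqrt{m-r}\,\tau$. Choosing $k\in I$ with $k>\sqrt{m-r}\,\tau$ contradicts $\Lambda_{\Pi}(x^k)\cap\bar{B}(0,k)=\emptyset$, and the theorem follows.

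The crux --- and the reason the perturbation step cannot be dropped --- is that \sdpscpld{} only delivers the constant-rank property along \emph{some} admissible sequence of eigenvector matrices, whereas the argument needs it along the particular eigenvectors that diagonalize the multiplier $Y^k$; the $M^k$-perturbation of Theorem~\ref{sdp:scrcqcq} is what collapses $\cb{G(z^k)+\Delta^k}$ (up to column signs) onto that choice. A secondary point needing care is transferring \sdpscpld{} from $\xb$ to the moving projections $z^k$ through Proposition~\ref{prop:robust}, which is what legitimizes treating each $z^k$ as a KKT point of its own projection subproblem.
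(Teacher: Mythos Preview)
Your proof is correct and follows essentially the same route as the paper: contradict Proposition~\ref{stab:prop1-text}, project onto $\F$ to obtain $z^k\to\xb$, invoke robustness (Proposition~\ref{prop:robust}) to secure a KKT multiplier $Y^k$ at $z^k$, spectrally decompose $Y^k$, apply Carath\'eodory's Lemma with the pigeonhole principle, and reach a contradiction with \sdpscpld{}. Your write-up is more explicit than the paper's in a few places---notably in verifying complementarity for the reduced multiplier $\widehat Y^k$ and in deploying the $M^k$-perturbation of~\eqref{sdp:mk} to pin down $E^k$ (the paper instead leans implicitly on the non-sequential characterization of Proposition~\ref{sdp:nonseq})---but the strategy and key ingredients are identical.
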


\begin{proof}
Suppose that metric subregularity CQ does not hold at $\xb$. In view of Proposition~\ref{stab:prop1-text}, there exist sequences $\seq{\tau}\to \infty$ and $\seq{y}\to \xb$ such that $\Lambda(y^k)\cap \textnormal{cl}(B(0,\tau^k))= \emptyset$ for every $k\in \N$. 

Now let $\seq{z}$ be such that $z^k=\Pi_{\F}(y^k)$ for each $k$ and note that $z^k\to \xb$. By the previous proposition, $z^k$ satisfies metric subregularity for all $k$ large enough. Consequently, there exists a sequence  $\seq{Y}\subseteq \sym_+$ such that
\[
	\frac{z^k-y^k}{\|z^k-y^k\|}-DG(z^k)^*[Y^k]=0
\]
and $\langle G(z^k),Y^k\rangle=0$ for every $k$, which implies that $\lambda_i(Y^k)=0$ for every $i\in \{m-r+1,\ldots,m\}$ and every $k\in \N$. Let $U^k$ be an arbitrary matrix that diagonalizes $Y^k$ and let $E^k$ be the part of it that corresponds to the $m-r$ smallest eigenvalues of $G(z^k)$. So
\begin{equation}\label{sdp:eqms}
	\frac{z^k-y^k}{\|z^k-y^k\|}-\sum_{i=1}^{m-r} \lambda_i(Y^k) v_{ii}(x^k,E^k)=0.
\end{equation}
Again, by Caratheodory' lemma (cf. Lemma \ref{lem:carath}) and the infinite pigeonhole principle, we obtain a set $J\subseteq \{1,\ldots,m-r\}$ such that $\{v_{ii}(x^k,E^k)\colon i\in J\}$ is linearly independent and $\sum_{i=1}^{m-r}\lambda_i(Y^k)v_{ii}(x^k,E^k)=\sum_{i\in J}\alpha_i^k v_{ii}(x^k,E^k)$ for every $k$ where $\alpha_i^k \lambda_i(Y^k) > 0$ for all $i\in J$. Then, recall from the definition that $Y^k\in \Lambda(y^k)$, so $\|Y^k\|> \tau^k\to \infty$. Let $m^k\doteq \max\{\alpha_i^k\colon i\in J\}$ and divide~\eqref{sdp:eqms} by $m^k$ to obtain that $\{v_{ii}(\xb,\bar{E})\colon i\in J\}$ is linearly dependent for every limit point $\bar{E}$ of $\seq{E}$, which contradicts \sdpscpld{} at $\xb$.
\end{proof}

%
%
%
%
%
%
%
%
%
%
%
%
%
%

\section{Conclusion}\label{sec:conclusion}

There are few constraint qualifications available for NSDP, and as far as we know, the use of CQs in the global convergence of algorithms is somewhat limited to nondegeneracy and Robinson's CQ. In contrast, several constraint qualifications have been defined for NLP over the past decades, mostly improving the global convergence of algorithms beyond the case when the set of Lagrange multipliers is bounded. We are in a path to extend these CQs to conic contexts, such as NSDP, that started in~\cite{crcq-naive}. In fact, the results of this paper can be considered a significant improvement of~\cite{crcq-naive} based on our previous developments in~\cite{weaksparsecq}. We introduced two weak constant rank CQs for NSDP, called \sdpwcrcq{} and \sdpwcpld{}, which are essentially ``diagonal extensions'' of their NLP counterparts, in the sense of Proposition~\ref{prop:crcqrecovers}. Namely, one can embed an NLP problem using a structurally diagonal semidefinite constraint and both conditions are preserved. This is a fairly unusual property as this approach usually induces a degenerate NSDP problem; we however believe that this, in some sense, provides a sound mathematical consistency to our approach. These conditions were used to prove convergence of an external penalty method to stationary points, but any application beyond that, besides the mere existence of Lagrange multipliers, is still a subject for investigation. However, they were the starting points for introducing stronger constant rank CQs, called \sdpscrcq{} and \sdpscpld{}, with more interesting properties, such as the convergence theory of a larger class of algorithms such as augmented Lagrangians, sequential quadratic programming, and interior point methods, and a property related with the ability to compute error bounds under these conditions. We believe that several other applications of constant rank CQs will appear in the literature, such as the computation of the derivative of the value function of a parameterized NSDP problem and the computation of second-order necessary optimality conditions. In NLP, constant rank CQs are used to define a strong second-order necessary optimality condition that depends on a single Lagrange multiplier, rather than on the full set of Lagrange multipliers, which we believe will be the case for conic problems as well. It is also the case that constant rank conditions provide the adequate assumptions for guaranteeing global convergence of algorithms to second-order stationary points, which has not been considered yet in the conic programming literature. 

This paper leaves several interesting open questions that can be addressed in future works, such as the use of \sdpwcrcq{} and \sdpwcpld{} in algorithms other than external penalty methods, and the analysis of some stability properties under the conditions introduced in this manuscript. It is also worth recalling that although our conditions were defined by means of sequences, which seems appropriate when talking about convergence of algorithms, we also provided characterizations of them without sequences, in a more classical way, which should foster new applications. 

The relationship among the CQs we presented in this paper, and existing ones, is summarized in the following diagram, where (solid) arrows represent (strict) implications, existing CQs are in blue boxes, and new CQs are in green boxes.

\tikzstyle{old} = [rectangle, rounded corners, minimum width=1cm, minimum height=0.5cm,text centered, draw=black, fill=blue!20]
\tikzstyle{new} = [rectangle, rounded corners, minimum width=1cm, minimum height=0.5cm,text centered, draw=black, fill=green!20]
\tikzstyle{weak} = [rectangle, rounded corners, minimum width=1cm, minimum height=0.5cm,text centered, draw=black, fill=gray!20]
\tikzstyle{naive} = [rectangle, rounded corners, minimum width=1cm, minimum height=0.5cm,text centered, draw=black, fill=purple!20]
\tikzstyle{arrow} = [thick,->,>=stealth]
\tikzstyle{arrowdash} = [thick,->,dashed,>=stealth]

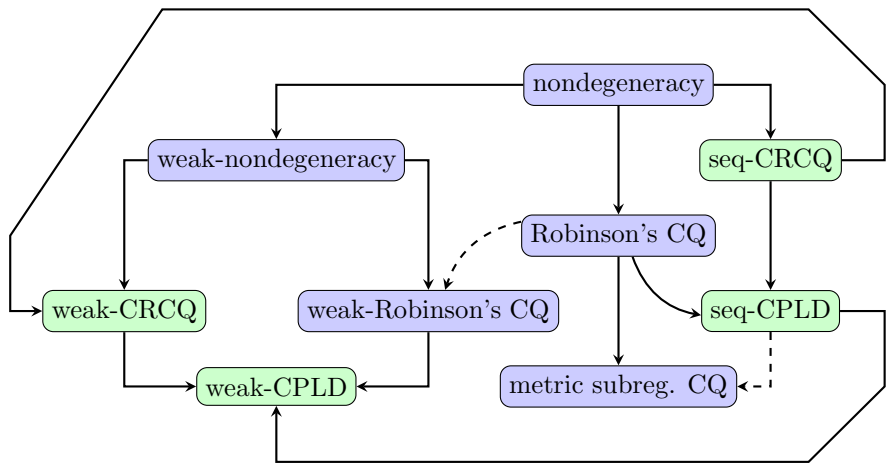
\begin{figure}[htb]
		\centering
        \begin{tikzpicture}[node distance=1cm]
        	\node (ndg) [old] {nondegeneracy};
        	\node (wndg) [old, left of=ndg,xshift=-3.5cm,yshift=-1cm] {weak-nondegeneracy};
            \node (wrob) [old, right of=wndg,xshift=1cm,yshift=-2cm] {weak-Robinson's CQ};
            \node (wcrcq) [new, left of=wndg,xshift=-1cm,yshift=-2cm] {\sdpwcrcq{}};
            \node (wcpld) [new, below of=wndg,yshift=-2cm] {\sdpwcpld{}};
			\node (rob)[old, below of=ndg, yshift=-1cm] {Robinson's CQ};  
            \node (scrcq) [new, right of=ndg,xshift=1cm,yshift=-1cm] {\sdpscrcq{}};
            \node (scpld) [new, below of=scrcq,yshift=-1cm] {\sdpscpld{}};
            \node (msr) [old, below of=rob,yshift=-1cm] {metric subreg. CQ};
            \draw [arrow] (ndg) -- (rob);
            \draw [arrow] (rob) -- (msr);
			\draw [arrow] (ndg) -| (scrcq);
			\draw [arrow] (scrcq) -- (scpld);
			\draw [arrow] (scrcq) -| (3.5,0) -- (2.5,1) -- (-6,1) -- (-8,-2) -- (-8,-3) -- (wcrcq);
			\draw [arrow] (scpld) -| (3.5,-4) -- (2.5,-5) -- (-4.5,-5) -- (wcpld);
			\draw [arrowdash] (scpld) |- (msr);
			\draw [arrow] (rob) to [bend right] (scpld);
			\draw [arrow] (ndg) -| (wndg);
			\draw [arrow] (wndg) -| (wcrcq);
			\draw [arrow] (wndg) -| (wrob);
			\draw [arrow] (wrob) |- (wcpld);
			\draw [arrow] (wcrcq) |- (wcpld);
			\draw [arrowdash] (rob) to [bend right] (wrob);
        \end{tikzpicture}
    \caption{Relationship among the new constraint qualifications and some of the existing ones.}
    \label{fig:sequentialkkt}
\end{figure}

\jo{\newpage}
\bibliographystyle{siamplain}

\end{document}